 \newtheorem{theorem}{Theorem}[section]
 \newtheorem*{theorem*}{Theorem}
 \newtheorem*{lemma*}{Lemma}
 \newtheorem{proposition}[theorem]{Proposition}
 \newtheorem{fact*}{Fact}
 \newtheorem{lemma}[theorem]{Lemma}
 \newtheorem{introtheorem}{Theorem}
\theoremstyle{definition}
 \newtheorem*{remark*}{Remark}
\numberwithin{equation}{section}
\newcommand{\R}{\boldsymbol{R}}
\newcommand{\Z}{\boldsymbol{Z}}
\newcommand{\C}{\boldsymbol{C}}
\newcommand{\rank}{\operatorname{rank}}
\renewcommand{\phi}{\varphi}
\newcommand{\hess}{\operatorname{Hess}}
\newcommand{\A}{\mathcal{A}}
\newcommand{\RR}{\mathcal{R}}
\newcommand{\ep}{\varepsilon}
\newcommand{\pmt}[1]{{\begin{pmatrix} #1  \end{pmatrix}}}
\newcommand{\trans}[1]{\vphantom{#1}^t\!#1}
\newcommand{\mycomment}[1]{}
\newcommand{\addresslist}
{
\begin{tabular}{l}
(Y. Kabata)\\
School of Information and Data Sciences,\\
Nagasaki University, \\
Bunkyocho 1-14, Nagasaki, 852-8131, Japan\\
{\tt kabata@nagasaki-u.ac.jp}\\[3mm]
(K. Saji)\\
Department of Mathematics,\\
Graduate School of Science, \\
Kobe University, \\
Rokkodai 1-1, Nada, Kobe, 657-8501, Japan\\
{\tt saji@math.kobe-u.ac.jp}
\end{tabular}
}
\begin{document}
\title
{Criteria for sharksfin and deltoid singularities
from the plane into the plane
and their applications}
\author{Yutaro Kabata \and Kentaro Saji}



\date{\today}
\maketitle
\renewcommand{\thefootnote}{\fnsymbol{footnote}}
\footnote[0]{Keywords and Phrases: 
criteria, sharksfin, deltoid, Whitney umbrella, planar motions}
\footnote[0]{Partly supported by the
Japan Society for the Promotion of Science KAKENHI 
Grants numbered 18K03301, 20K14312
and the Japan-Brazil bilateral project JPJSBP1 20190103.}
\footnote[0]{Mathematics Subject Classification 2020 Primary 
57R45; 
Secondary 58K05 
}
\begin{abstract}
We give criteria for sharksfin and deltoid singularities
from the plane into the plane.
We also give geometric meanings for the conditions
in the criterion of a sharksfin.
As applications, we investigate such singularities
appearing on an orthogonal projection
of a Whitney umbrella, and a sharksfin appearing on 
planar motions with $2$-degrees of freedom.
\end{abstract}
\section{Introduction}
The singularities of maps from the plane into the plane
have been one of the fundamental subjects in the singularity theory
of the smooth maps.
Classification of singularities 
of maps from the plane into the plane has been investigated by
many researchers, 
and useful recognition criteria for 
main corank one singularities are given
(see for example \cite{bdupw,goryu,kabata, ohmaic, rieger, rieruas, sajipl,whitney}).
However,  as long as the authors know,
useful recognition criteria for corank two
maps have not been given in the literature.
In this paper, we give useful recognition criteria for
``sharksfin'' and ``deltoid'' singularities,
which are the most generic singularities of 
corank two maps from the plane into the plane:
\begin{introtheorem}\label{thm:cri}
Let\/ $f:(\R^2,0)\to(\R^2,0)$ be a map-germ satisfying\/
$\rank df_0=0$, and let\/
$\lambda$ be an identifier of singularities of $f$.
Let\/ $\lambda$ have a non-degenerate critical point
at\/ $0$, and let\/ $\eta_1,\eta_2$ be 
vector fields which satisfy that\/
$\eta_1(0)$ and\/ $\eta_2(0)$ are linearly independent
solutions of the Hesse quadric of\/ $\lambda$ at\/ $0$.
Then\/ $f$ is a sharksfin\/
$($respectively, deltoid\/$)$ at\/ $0$
if and only if\/
$\det \hess (\lambda)(0)<0$ $($respectively, $\det \hess (\lambda)(0)>0)$,
\begin{equation}\label{eq:cricond}
\det(\eta_1^2f,\eta_1^3f)(0)\ne0\quad\text{and}\quad
\det(\eta_2^2f,\eta_2^3f)(0)\ne0.
\end{equation}
\end{introtheorem}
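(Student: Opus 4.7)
The strategy is to bring $f$ into a convenient normal form by an $\A$-equivalence, reduce the criterion to explicit non-vanishing conditions on the $3$-jet, and then invoke $\A$-$3$-determinacy of the sharksfin and deltoid models. The sign of $\det\hess(\lambda)(0)$ governs the local structure of the singular set $S(f)=\{\lambda=0\}$ at $0$: if negative, the Hesse quadric factors into two distinct real linear forms and $S(f)$ has two transverse real branches tangent to $\eta_1(0),\eta_2(0)$ (the sharksfin case); if positive, the quadric has complex conjugate null directions and $S(f)=\{0\}$ as a germ (the deltoid case), so the $\eta_i$ must be interpreted as complex conjugate vector fields on the complexification.

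The first step is to verify that all ingredients in the criterion are well-behaved under $\A$-equivalence and under changes of the auxiliary vector fields. A source diffeomorphism multiplies $\lambda$ by the square of its Jacobian, preserving the sign of $\det\hess(\lambda)(0)$ and the pair of null lines; a target linear change acts by a fixed element of $GL_2(\R)$ on each $\eta_i^k f(0)$, preserving non-vanishing of the $2\times 2$ determinant; and replacing $\eta_i$ by $a\eta_i+(\text{vector field vanishing at }0)$ rescales the determinant by $a^4$ up to terms that vanish at $0$. Using these invariances I would choose source coordinates with $\eta_1(0)=\partial_x,\ \eta_2(0)=\partial_y$ (allowing complex coordinates in the deltoid case), and a target linear change to bring the $2$-jet of $f$ to the canonical pair of quadratic forms dictated by the sign of $\det\hess(\lambda)(0)$. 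In these coordinates \eqref{eq:cricond} becomes the explicit non-vanishing of $\det(f_{xx},f_{xxx})(0)$ and $\det(f_{yy},f_{yyy})(0)$ in the sharksfin case, and of their complex analogues in the deltoid case, after which further unipotent $\A$-equivalences bring the $3$-jet of $f$ to the standard sharksfin (resp. deltoid) model.

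Finally, the ``if'' direction would be completed by invoking $\A$-$3$-determinacy of the sharksfin and deltoid models, either by quoting a Mather--Gaffney type finite-determinacy theorem or by computing the complete transversal in degrees $\ge 4$ directly and showing it is empty modulo the tangent space of the $\A$-orbit. The ``only if'' direction is then a direct verification on the two normal forms. I expect the principal obstacle to be the determinacy argument, which requires a careful tangent-space calculation for the $\A$-orbit of the standard $3$-jet; a secondary obstacle is the deltoid case, where the complex-conjugation symmetry must be respected throughout, most naturally by complexifying, performing the reduction equivariantly under conjugation, and then descending back to the real setting.
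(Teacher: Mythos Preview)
Your strategy is essentially the paper's: establish invariance of the criterion, pass to a normal form for the $2$-jet, compute the criterion there as explicit conditions on the cubic coefficients, reduce the $3$-jet by further $\A$-changes to the model, and conclude by $3$-determinacy (the paper cites Rieger--Ruas for this last step rather than redoing a tangent-space computation). The main difference is in bookkeeping: the paper always takes the first component to be $uv$ via the Morse lemma (so the null directions of the Hesse quadric are $(1,\pm1)$ or $(1,\pm i)$), whereas you diagonalise along the null directions so that the criterion becomes $\det(f_{xx},f_{xxx})(0)\ne0$, $\det(f_{yy},f_{yyy})(0)\ne0$. For the sharksfin these normalisations differ by a $45^\circ$ rotation and lead to the same calculation.

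For the deltoid the routes genuinely diverge. The paper stays over $\R$: starting from $(uv,-u^2/2+v^2/2+\tilde a_{30}u^3/6+\tilde a_{03}v^3/6)+O(4)$ it builds an explicit real coordinate change involving a root of a degree-$5$ polynomial to kill all cubic terms except one, and then reads off the condition $(\tilde a_{30},\tilde a_{03})\ne(0,0)$. Your proposed complexification-with-conjugation-equivariance would replace that somewhat intricate real computation by the same diagonal calculation you do for the sharksfin, at the cost of having to check that the $\A$-equivalence you produce descends to $\R$. Both work; the paper's choice avoids the descent argument but pays with a longer explicit formula.

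Two small slips to fix in your invariance discussion: a source diffeomorphism multiplies the Jacobian (hence any identifier) by $\det J\phi$, not its square (the sign of $\det\hess\lambda(0)$ is still preserved because the Hessian at a critical point scales by $c(0)$ under multiplication by a nonvanishing function $c$); and rescaling $\eta_i\mapsto a\eta_i$ multiplies $\det(\eta_i^2f,\eta_i^3f)(0)$ by $a^{5}$, not $a^4$. Neither affects the conclusion, but you should correct them before writing up.
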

Here, $\eta f$ stands for the directional derivative of
$f$ by the vector field $\eta$, and
$\eta^if=\eta(\eta^{i-1}f)$.
See Section \ref{sec:subpre} for the definitions
of
identifier of singularities
and
solutions of the Hesse quadric of a function at a non-degenerate
critical point.
We also give a geometric meaning of the condition 
\eqref{eq:cricond} as follows:
If an identifier of singularities has an 
index $1$ non-degenerate critical point at $0$, then
the singular set $S(f)$ consists of two transversal regular
curves, say $\gamma_i$ $(i=1,2)$.
The condition 
\eqref{eq:cricond} is then equivalent to
both $f\circ\gamma_i$ $(i=1,2)$ having $3/2$-cusps at $0$ 
(Proposition \ref{prop:gamma12cusp}).

As an application of the criteria, we investigate the
geometry of an orthogonal projection of a Whitney umbrella 
(Section \ref{sec:whit}) and 
singularities of planar motions with $2$-degrees of freedom 
(Section \ref{sec:plmo}).
Since maps from the plane into the plane
appear in several geometric situations
(see for example \cite{gibsonmarshxiang, kabata, OTflat, sajipl, WE}), 
the authors believe these criteria will work well 
if one wishes to find concrete conditions for
a given map to be a sharksfin or a deltoid.

See \cite{sajid4} for the criteria for 
$D_4^\pm$-singularities of fronts,
which are similar singularities of a sharksfin and a deltoid.

\section{Preliminaries and proof of criteria}\label{sec:pre}
\subsection{Preliminaries}\label{sec:subpre}
A map-germ $f:(\R^2,0)\to(\R^2,0)$ is called
a {\it sharksfin\/} (respectively, {\it deltoid\/})
if it is $\A$-equivalent to
the map-germ
$$
(u,v)\mapsto(uv,u^2+v^2+u^3)\quad
\big(\text{respectively,}\ (uv,-u^2+v^2+u^3)\big)
$$
at the origin $0$.
Here two map-germs
$f_i:(\R^2,0)\to(\R^2,0)$ $(i=1,2)$ are said to be
{\it $\A$-equivalent\/} if there exist
a diffeomorphism-germ $\phi$ on the source space of $f_1$ and
a diffeomorphism-germ $\Phi$ on the target space of $f_2$
such that $\Phi\circ f_1=f_2\circ\phi$ holds.
We note that the map-germ $
(u,v)\mapsto(uv,u^2+v^2+u^3)$ is $\A$-equivalent to 
$(u,v)\mapsto(u^2+v^3,v^2+u^3)$, and
some of the literature uses this form for a sharksfin.
A fundamental classification of map-germs from the plane into the plane 
is given in \cite{whitney,rieger}. 
In \cite{rieruas}, it is shown that
a sharksfin and a deltoid are the only singularities 
of rank zero and codimension equal to or less than four.
See \cite{kabata,sajipl,suyak,whitney} for recognition criteria
for fundamental rank one singularities.

Let $f:(\R^2,0)\to(\R^2,0)$ be a map-germ and let
$\rank df_0=0$.
Let $\lambda$ be a non-zero functional multiple
of the Jacobian of $f$.
Since the zeros of $\lambda$ are the set of singular points of $f$,
we call the function $\lambda$ an {\it identifier of singularities}.
By the assumption $\rank df_0=0$, the function
$\lambda$ has a critical point at $0$.

To state the criteria, we define 
vectors at a non-degenerate critical point of a function.
Let $h:(\R^2,0)\to(\R,0)$ be a function which has
a non-degenerate critical point at $0$.
The {\it solution of the Hesse quadric\/} of $h$ at $0$
is a non-zero vector $(x,y)$ which satisfies
$$
\left(\pmt{x\ y}\hess (h)(0)\pmt{x\\y}
=\right)h_{uu}(0)x^2+2h_{uv}(0)xy+h_{vv}(0)y^2=0,
$$
where 
$(\cdot)_u=\partial(\cdot)/\partial u$,
$(\cdot)_v=\partial(\cdot)/\partial v$
and
$\hess(h)(0)$ is the Hesse matrix of $h$ at $0$.
If $\det \hess (h)(0)>0$, then there
exist two $\C$-linearly independent $\C$-valued
vectors $v_1,v_2$.
If $\det \hess (h)(0)<0$, then there
exist two $\R$-linearly independent $\R$-valued
vectors $v_1,v_2$, and these are
the tangent vectors of two branch curves of
the zero set of $h$ at $0$.
For a diffeomorphism $\phi:(\R^2,0)\to(\R^2,0)$,
since the Hesse matrix of $h\circ\phi^{-1}$ is
$\trans{J_\phi(0)^{-1}}\hess (h)(0)J_\phi(0)^{-1}$,
and
$$
\pmt{x\ y}\hess (h)(0)\pmt{x\\y}=
\trans{\left(J_\phi(0)\pmt{x\\ y}\right)}\,
\hess (h\circ \phi^{-1})(0)\,J_\phi(0)\pmt{x\\y},
$$
the two directions defined by
solutions of the Hesse quadric of $h$ at $0$
do not depend on the choice of the coordinate system
on the source space.
Here, $\trans{(~)}$ stands for the matrix transposition,
and $J_\phi$ stands for the Jacobi matrix of $\phi$.
We note that if 
solutions of the Hesse quadric $\eta_i$ $(i=1,2)$ are complex vectors,
then the left-hand sides of \eqref{eq:cricond} may be
complex numbers.

\subsection{Proof of Theorem \ref{thm:cri}}
We give a proof of Theorem \ref{thm:cri}.
We first simplify the expression of a rank zero-germ 
$f:(\R^2,0)\to(\R^2,0)$ by
coordinate changes.
Keeping in mind that we will investigate geometric meanings,
we restrict to using only particular coordinate changes
in the next lemma.

\begin{lemma}\label{lem:coordchange010}
If\/ $\rank df_0=0$ and\/ 
$0$ is a non-degenerate critical point of\/ $\lambda$,
then there exist a positive local coordinate system\/ $(u,v)$ near\/
$0$, and 
an orientation preserving isometry\/ 
$M$ on\/  $(\R^2,0)$, namely, $M\in {\rm SO}(2)$, 
such that
$$
M\circ f(u,v)=(uv,O(2)).
$$
\end{lemma}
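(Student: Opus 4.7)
The plan is to perform three successive orientation-preserving simplifications: first a rotation $M\in\mathrm{SO}(2)$ on the target so that the quadratic part of $(M\circ f)_1$ becomes an indefinite binary form; then a linear change of source coordinates turning this quadratic part into $\tilde u\tilde v$; and finally the Morse lemma on the source to kill the remaining higher-order terms in the first component.

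Let the quadratic parts of $f_1,f_2$ at the origin be $Q_1$ and $Q_2$. For $M=M(\theta)\in\mathrm{SO}(2)$, the quadratic part of $(M\circ f)_1$ is $Q_\theta:=\cos\theta\cdot Q_1+\sin\theta\cdot Q_2$. Expanding the $2$-jet of the Jacobian of $f$ shows that $\det\hess(\lambda)(0)$ equals, up to a non-zero constant, the determinant of the symmetric form on $\R^2$ whose value at $(\cos\theta,\sin\theta)$ is the binary discriminant of $Q_\theta$. When $\det\hess(\lambda)(0)<0$ this symmetric form is indefinite, so some $\theta$ makes $Q_\theta$ indefinite. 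When $\det\hess(\lambda)(0)>0$ it is sign-definite; Minkowski's inequality for mixed discriminants of $2\times 2$ symmetric matrices forbids the negative-definite alternative, since negative-definiteness would force $Q_1$ and $Q_2$ to be simultaneously definite, in which case Minkowski gives the reverse inequality. Hence in either case we may fix $\theta$ with $Q_\theta$ indefinite and set $M:=M(\theta)$.

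Factor the indefinite form as $Q_\theta=L_1L_2$ with linearly independent real linear forms $L_1,L_2$, and take linear source coordinates $(\tilde u,\tilde v)=(L_1,L_2)$, swapping $L_1\leftrightarrow L_2$ if needed to obtain a positive Jacobian (the product $L_1L_2$ is unchanged). This turns $M\circ f$ into $(\tilde u\tilde v+O(3),\,O(2))$. The first component now has a non-degenerate critical point at $0$ of Morse index $1$, so the Morse lemma furnishes a local diffeomorphism of the source in which it becomes exactly $uv$; if that diffeomorphism is orientation-reversing, post-composing with $(u,v)\mapsto(v,u)$ restores positivity, since $uv$ is symmetric in $u,v$. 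The resulting positive coordinates $(u,v)$ give $M\circ f(u,v)=(uv,O(2))$ as required. The main obstacle is the first step, specifically the sub-case $\det\hess(\lambda)(0)>0$, where the Minkowski inequality is genuinely needed to rule out the possibility that every rotation leaves the first component with a definite quadratic part.
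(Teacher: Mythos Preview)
Your argument is correct, but the route differs substantially from the paper's.

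The paper proceeds by an elementary case analysis: after arranging $(f_1)_{uu}(0)\neq 0$ and rotating so that $f_2=b_{11}uv+b_{02}v^2+O(3)$, the only problematic case is $b_{11}=0$ with $f_1$ definite, and there an explicit one-parameter rotation is exhibited whose effect on the discriminant of the first quadratic part is computed by hand, yielding an angle at which it becomes negative. No inequality beyond a direct sign check is used.

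Your approach replaces this case-by-case search by the identity (which you assert but do not display) that, writing the Hessians of $Q_1,Q_2$ as $A,B$, one has
\[
\det\hess(\lambda)(0)\;\propto\;4\det A\,\det B-M^2,\qquad M=a_{20}b_{02}+a_{02}b_{20}-2a_{11}b_{11},
\]
which is exactly the determinant of the quadratic form $(c,s)\mapsto -\det(cA+sB)$. This identity is the genuine content of your first step and deserves at least a one-line verification; once it is in hand, the remainder is clean. Two small points worth making explicit: (i) ``simultaneously definite'' must include the mixed-sign case $A$ positive, $B$ negative definite, which is dispatched either by the continuity argument on $\operatorname{tr}(cA+sB)$ or, more uniformly, by applying Minkowski to $A,-B$; (ii) the constant of proportionality between $\det\hess\lambda(0)$ and the discriminant-form determinant is positive, so the signs really do match.

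What each approach buys: the paper's argument is entirely self-contained and requires no outside inequality, at the cost of several ad hoc reductions. Your argument is shorter and more structural---it explains \emph{why} an indefinite rotation always exists, namely because the obstruction form can never be negative-definite---but it imports Minkowski's inequality and rests on an algebraic identity that the reader must take on faith or verify separately. Both end with the same Morse-lemma step; your orientation fix $(u,v)\mapsto(v,u)$ is the same device the paper uses elsewhere.
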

Here, $O(i)$ stands for the terms whose degrees are greater
than or equal to\/ $i$, namely, $O(i)$ is an element of
$$
\left\{f\in C^\infty(\R^2,0)\,\Big|\, 
\sum_{a+b\leq i-1}\dfrac{\partial^{a+b}f}{\partial u^a\partial v^b}(0)=0
\right\}.
$$
A coordinate system is said to be {\it positive\/}
if it has the same orientation to the standard $\R^2$.
\begin{proof}
Firstly we show that we may assume
$0$ is a non-degenerate critical point
with index $1$ of $f_1$, where $f=(f_1,f_2)$.
If $f_1=O(3)$ or $f_2=O(3)$,
then the Jacobian of $f$ is $O(3)$. In particular,
$0$ will never be a non-degenerate
critical point of an identifier of singularities.
Thus, we may assume $(f_1)_{uu}(0)\ne0$ by a suitable rotation
on the target and by choosing a positive coordinate system on the source.
By a rotation on the target,
$f$ can be written as
$f=(f_1,b_{11}uv+b_{02}v^2)+O(3)$.
If $b_{11}\ne0$, then the claim is proved.
We assume $b_{11}=0$.
Since $f_2\ne O(3)$, it holds that $b_{02}\ne0$.
Thus $f$ can be written as
$f=(a_{20}u^2+2a_{11}uv+a_{02}v^2,b_{02}v^2)+O(3)$.
If $a_{20}a_{02}-a_{11}^2<0$, then the claim is proved,
and if $a_{20}=0$, then $\lambda$ degenerates.
So, we may assume $a_{20}a_{02}-a_{11}^2>0$ and $a_{20}\ne0$.
By taking the rotation by degree $\theta$ on the target,
the first component of $f$ can be written as
$a_{20}\cos\theta u^2+2a_{11}\cos\theta uv+(a_{02}\cos\theta-b_{02}\sin\theta)v^2$.
One can find an angle $\theta$ such that 
$\cos\theta(\cos\theta(a_{20}a_{02}-a_{11}^2)-\sin\theta a_{20}b_{02})<0$.
This shows the claim.

Thus we may assume that $0$ is a non-degenerate critical point
with index $1$ of $f_1$.
By the Morse lemma, there exists a positive coordinate system $(u,v)$ such
that $f$ can be written as
$f=(\pm uv,O(2))$. Taking a $\pi$-rotation on the target if
necessary, this shows the assertion.
\end{proof}
By Lemma \ref{lem:coordchange010},
we see $f$ is $\A$-equivalent to
\begin{equation}\label{eq:fchange005}
f(u,v)=
\Big(uv,\ep \dfrac{u^2}{2}+\dfrac{v^2}{2}+
\sum_{i+j=3}\dfrac{a_{ij}u^iv^j}{i!j!}
+O(4)\Big) \ (\ep=\pm1).
\end{equation}
\begin{lemma}\label{lem:condo3}
Let\/ $f$ be a map-germ of the form\/
\eqref{eq:fchange005}.
When\/ $\ep=1$, then\/ $f$ is a sharksfin if and only if\/
$(a_{30}-3a_{21}+3a_{12}-a_{03})(a_{30}+3a_{21}+3a_{12}+a_{03})\ne0$.
When\/ $\ep=-1$, then\/ $f$ is deltoid if and only if\/
$a_{30} - 3 a_{12} \ne0$ or\/ $3 a_{21} - a_{03}\ne0$.
\end{lemma}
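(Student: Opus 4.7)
The plan is to reduce the $3$-jet of the form \eqref{eq:fchange005} by further $\A$-equivalence to a cubic normal form, and then to invoke the $3$-determinacy of the sharksfin and deltoid singularities to pass from the $3$-jet to the whole germ.

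For the case $\epsilon=1$, I first apply the linear source change $u=\xi+\eta$, $v=\xi-\eta$ together with the linear target change sending $(X,Y)$ to $((X+Y)/2,(Y-X)/2)$; these transform the $2$-jet of $f$ into $(\xi^2,\eta^2)$, the $2$-jet of the sharksfin normal form $(u^2+v^3,v^2+u^3)$. A direct expansion shows that under the same transformations the cubic $P_3(u,v):=\sum_{i+j=3}a_{ij}u^iv^j/(i!j!)$ becomes a common cubic $A\xi^3+B\xi^2\eta+C\xi\eta^2+D\eta^3$ added to both components, where
$$A=\tfrac{1}{6}(a_{30}+3a_{21}+3a_{12}+a_{03}),\qquad D=\tfrac{1}{6}(a_{30}-3a_{21}+3a_{12}-a_{03})$$
and $B,C$ are specific linear combinations of the $a_{ij}$ that will turn out to be irrelevant for the final statement. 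Next I apply a quadratic source change $\xi\mapsto\xi+p(\xi,\eta)$, $\eta\mapsto\eta+q(\xi,\eta)$; a parameter count shows that the six coefficients of $p,q$ suffice to kill the $\xi^3,\xi^2\eta,\xi\eta^2$ terms in the first component and the $\xi^2\eta,\xi\eta^2,\eta^3$ terms in the second, leaving a reduced $3$-jet whose cubic parts are proportional to $D\eta^3$ in the first component and to $A\xi^3$ in the second. Rescaling $\xi,\eta$ then identifies this $3$-jet with the sharksfin normal form precisely when $AD\neq0$, giving the stated product condition.

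For $\epsilon=-1$ the same scheme applies, but since the change diagonalizing the indefinite quadratic $(-u^2+v^2)/2$ is naturally complex, it is cleaner to pass to complex coordinates $\zeta=u+iv$, $\bar\zeta=u-iv$. One checks directly that the $2$-jet $(uv,(-u^2+v^2)/2)$ is $\A$-equivalent over $\C$ to $(\zeta^2,\bar\zeta^2)$, the complex analog of the sharksfin $2$-jet. Running the same parameter-counting reduction over $\C$, the surviving cubic invariant turns out to be the single complex number
$$\alpha=\tfrac{1}{6}\bigl((a_{30}-3a_{12})+i(3a_{21}-a_{03})\bigr),$$
and the reduced $3$-jet takes the form $(\zeta^2+\bar\alpha\bar\zeta^3,\bar\zeta^2+\alpha\zeta^3)$. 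Under a real source rotation by $\theta$ together with an appropriate compensating target rotation, the invariant $\alpha$ is multiplied by $e^{5i\theta}$; combined with real source/target scalings, the orbit of any nonzero $\alpha$ therefore covers $\C\setminus\{0\}$. Hence by $3$-determinacy of the deltoid, $f$ is a deltoid if and only if $\alpha\neq0$, i.e., $a_{30}-3a_{12}\neq0$ or $3a_{21}-a_{03}\neq0$.

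The main obstacle is the bookkeeping of the quadratic source change: one needs to verify that the induced linear map from the six coefficients of $p,q$ to the eight cubic coefficients of the two components has the exact rank and image claimed, in both cases. For the deltoid case an additional step is to translate the complex reduction back to real $\A$-equivalence via the $e^{5i\theta}$-action above, which confirms that the single complex condition $\alpha\neq0$ corresponds exactly to the stated real or-condition. Finite determinacy of the sharksfin and deltoid (standard in the classification of rank-zero plane-to-plane map-germs; see e.g.\ \cite{rieruas}) is invoked at the end to upgrade the $3$-jet normal form to full $\A$-equivalence of germs.
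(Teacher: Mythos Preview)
Your argument is correct. For $\ep=1$ it is essentially the same as the paper's: the paper first applies a quadratic source change to kill the $u^2v,uv^2$ terms in the second component (obtaining the invariants $\tilde a_{30}=a_{30}+3a_{12}$, $\tilde a_{03}=a_{03}+3a_{21}$), then a linear source/target change to reach the form $(u_2^2+c_1v_2^3,\,v_2^2+c_2u_2^3)$ with $c_1,c_2$ proportional to $\tilde a_{03}\mp\tilde a_{30}$; you simply reverse the order of these two steps. Your parameter count for the quadratic reduction is in fact trivially verified, since after diagonalising the $2$-jet to $(\xi^2,\eta^2)$ the effect of $(\xi,\eta)\mapsto(\xi+p,\eta+q)$ on the cubics decouples: $p$ contributes $2\xi p$ only to the first component and $q$ contributes $2\eta q$ only to the second.

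For $\ep=-1$ your route is genuinely different from the paper's. The paper stays over $\R$: after the preliminary reduction to $(\tilde a_{30},\tilde a_{03})$, it introduces the quintic $p(x)=\tilde a_{03}x^5-5\tilde a_{30}x^4-10\tilde a_{03}x^3+10\tilde a_{30}x^2+5\tilde a_{03}x-\tilde a_{30}$, picks a real root $x_0$, and writes down explicit real source and target changes bringing the $3$-jet to $(u_2v_2,-u_2^2+v_2^2+cv_2^3)$; the argument then checks that $c\ne0$ whenever $(\tilde a_{30},\tilde a_{03})\ne(0,0)$ by evaluating $p$ at the roots of $5x^4-10x^2+1$. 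Your complex-coordinate method is more conceptual: writing $w=f_1+if_2$ one finds the $2$-jet is $w=-\tfrac{i}{2}\zeta^2$, and after a real target rotation/scaling simply $w=\zeta^2$; the quadratic source change over $\R$ (i.e.\ $\zeta\mapsto\zeta+h(\zeta,\bar\zeta)$ with $h$ an arbitrary complex quadratic) then kills the $\zeta^3,\zeta^2\bar\zeta,\zeta\bar\zeta^2$ terms of $w$, leaving only the $\bar\zeta^3$ coefficient, which is proportional to your $\alpha=\tfrac16\bigl((a_{30}-3a_{12})+i(3a_{21}-a_{03})\bigr)$. The $e^{5i\theta}$-action you describe explains precisely why the paper's explicit construction involves a degree-$5$ polynomial, and it reduces the final step (that any nonzero $\alpha$ gives a deltoid) to a one-line orbit argument rather than an explicit coordinate change. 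What the paper's approach buys in exchange is a fully explicit real normalisation, useful if one wants formulas rather than existence.
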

\begin{proof}
Following \cite[Propositions 2.1.1 and 2.2.1]{rieruas}, 
we will give a proof of this lemma.
By the coordinate change
$u=u_1+a_{12} u_1^2/2-\ep a_{21} u_1 v_1/2$, 
$v=v_1-a_{12} u_1 v_1/2+\ep a_{21} v_1^2/2$,
we see that
\begin{equation}\label{eq:fchange010}
f(u_1,v_1)=
\Big(u_1 v_1,\dfrac{\ep u_1^2}{2}+\dfrac{v_1^2}{2}
+\dfrac{\tilde{a}_{30}u_1^3}{6}+\dfrac{\tilde a_{03}v_1^3}{6} \Big)
+(O(4),O(4)),
\end{equation}
where $\tilde a_{30}=a_{30}+3 \ep a_{12}$
and
$\tilde a_{03}=a_{03}+3 \ep a_{21}$.
Let $f$ be written as in \eqref{eq:fchange010},
and $\ep=1$.
By the coordinate change
\begin{align*}
u_1=&u_2-v_2+(\tilde{a}_{03}-2 \tilde{a}_{30}) u_2^2/12
+\tilde{a}_{30} u_2 v_2/4- (\tilde{a}_{03}+2 \tilde{a}_{30}) v_2^2/12,\\
\quad
v_1=&u_2+v_2
+(\tilde{a}_{30}-2\tilde{a}_{03}) u_2^2/12 
-\tilde{a}_{03} u_2 v_2/4
-(2\tilde{a}_{03}+\tilde{a}_{30}) v_2^2/12
\end{align*}
and $\Phi(X,Y)=(X+Y,-X+Y)/2$,
we see
\begin{equation}\label{eq:fchange020}
\Phi\circ f(u_2,v_2)=
\Big(u_2^2 + 
\dfrac{(\tilde{a}_{03} - \tilde{a}_{30}) v_2^3}{12}, 
v_2^2+
\dfrac{(\tilde{a}_{03} + \tilde{a}_{30}) u_2^3}{12}\Big)
+(O(4),O(4)).
\end{equation}
Since a sharksfin is
$3$-determined (\cite[Proposition 2.1.1]{rieruas}),
$f$ in the form \eqref{eq:fchange005}
is a sharksfin if and only if
$(\tilde{a}_{03} - \tilde{a}_{30})(\tilde{a}_{03} + \tilde{a}_{30}) \ne0$,
namely,
$(a_{30}-3a_{21}+3a_{12}-a_{03})(a_{30}+3a_{21}+3a_{12}+a_{03})\ne0$.

Next, we show the case of a deltoid.
Let $f$ be written as in \eqref{eq:fchange010}
and $\ep=-1$.
Let $p(x)$ be the polynomial 
$$
p(x)=\tilde{a}_{03} x^5- 5 \tilde{a}_{30} x^4- 10 \tilde{a}_{03} x^3
+ 10 \tilde{a}_{30} x^2 + 5 \tilde{a}_{03} x -\tilde{a}_{30},
$$
and let $x_0$ be one of the solutions of $p(x)=0$.
We consider the coordinate change defined by
\begin{align*}
u_1=&
\dfrac{1}{96}
\Big(
96 u_2 -96x_0v_2
+u_2^2 (-5 \tilde{a}_{30} (-3+x_0^2)+\tilde{a}_{03} x_0 (5+x_0^2))\\
&\hspace{15mm}
+u_2v_2(-\tilde{a}_{03}  x_0^2 (-7+x_0^2)+5 \tilde{a}_{30} x_0 (-3+x_0^2))\\
&\hspace{25mm}
+v_2^2 (\tilde{a}_{30}  (-9+35 x_0^2)
      +x_0 (\tilde{a}_{03} (13-7 x_0^2)))\Big),\\
v_1=&
\dfrac{1}{96}
\Big(
96u_2 x_0+96v_2
- u_2^2 x_0 (-5 \tilde{a}_{30} (-3+x_0^2)+\tilde{a}_{03} x_0 (5+x_0^2))\\
&\hspace{15mm}
-4 u_2 v_2 (\tilde{a}_{30} (3-5 x_0^2)+\tilde{a}_{03} x_0 (5+x_0^2))\\
&\hspace{25mm}
+v_2^2 x_0 (\tilde{a}_{30} (57-35 x_0^2)+\tilde{a}_{03} x_0 (-61+7 x_0^2))\Big),
\end{align*}
and we consider
$$
\Phi(X,Y)
=
\Big(
\dfrac{(1 - x_0^2) X + 2 x_0 Y}{(1 + x_0^2)^2}, 
-\dfrac{2(x_0^2-1)(2 x_0 X +(x_0^2 -1)Y)}
{(1 + x_0^2)^2 (x_0^2-1)}\Big).
$$
Then $\Phi\circ f$ is
\begin{align}
&
\bigg(
u_2 v_2,
-u_2^2+v_2^2
+
\dfrac{(\tilde{a}_{03}^2 + \tilde{a}_{30}^2)(1 - 10 x_0^2 + 5 x_0^4)}
{3 \tilde{a}_{03} (1+x_0^2)^2}v_2^3\bigg)\nonumber\\
&+
\dfrac{p(x_0)}{96 (1+x_0^2)^2}
\bigg(-2x_0u_2^3
      +3(x_0^2-1)u_2^2 v_2
      +6x_0u_2 v_2^2
      +(9+7 x_0^2)v_2^3,\\
\label{eq:fchange120}
&\hspace{5mm}2\big(
 (x_0^2-1)u_2^3
+6x_0u_2^2 v_2
-3(x_0^2-1)u_2 v_2^2\nonumber\\
&\hspace{5mm}2
+2(8 \tilde{a}_{30}/\tilde{a}_{03} + 7 x_0)v_2^3
\big)\bigg)+(O(4),O(4)).
\nonumber
\end{align}
The solutions of $5 x^4-10 x^2 + 1=0$ are
$x_1=\pm (1 \pm 2/\sqrt{5})^{1/2}$,
and we see
$5p(x_1)=\pm 16\tilde{a}_{03}(10\pm 22/\sqrt{5})^{1/2}$
is not zero if $\tilde{a}_{03}\ne0$.
Since a deltoid is
$3$-determined (\cite[Proposition 2.2.1]{rieruas}),
$f$ in the form \eqref{eq:fchange005}
is a deltoid if and only if
$\tilde{a}_{30}\ne0$ or $\tilde{a}_{03}\ne0$,
namely,
$a_{30} - 3 a_{12} \ne0$ or $3 a_{21} - a_{03}\ne0$.
\end{proof}
\begin{lemma}\label{lem:indepvf}
Let\/ $0$ be a non-degenerate critical point of an identifier
of singularities\/ $\lambda$ of\/ $f:(\R^2,0)\to(\R^2,0)$, and let\/
$(\eta_{11},\eta_{12})$, $(\eta_{21},\eta_{22})$ 
$($possibly complex\/$)$ be two linearly independent solutions 
of the Hesse quadric
of\/ $\lambda$ at\/ $0$.
Let\/ $\eta_i$ be a vector field satisfying
$$
\eta_i(0)=\eta_{i1}\partial_u+\eta_{i2}\partial_v\quad
(i=1,2).$$
Then the condition
$$
\det(\eta_1^2f,\eta_1^3f)(0)\ne0\quad\text{and}\quad
\det(\eta_2^2f,\eta_2^3f)(0)\ne0
$$
does not depend on the choice of\/ $\eta_1,\eta_2$.
\end{lemma}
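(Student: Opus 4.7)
The plan is to separate the two ways $\eta_i$ can vary: rescaling by a nowhere-vanishing (possibly complex) function, and modification by a vector field that vanishes at $0$. Together these generate all admissible changes of $\eta_i$, and I would verify that each operation preserves the non-vanishing of $\det(\eta_i^2 f,\eta_i^3 f)(0)$.

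The main computational step is to expand $\eta^2 f(0)$ and $\eta^3 f(0)$ in local coordinates. Writing $\eta = a\partial_u + b\partial_v$ with $a_0 = a(0)$, $b_0 = b(0)$, and using $df_0 = 0$, a direct expansion gives
\[
\eta^2 f(0) = a_0 P + b_0 Q, \quad P := a_0 f_{uu}(0) + b_0 f_{uv}(0),\ Q := a_0 f_{uv}(0) + b_0 f_{vv}(0),
\]
so $\eta^2 f(0)$ depends only on $\eta(0)$. Continuing, I would isolate the part of $\eta^3 f(0)$ that depends on the $1$-jet $d\eta(0)$ and check that it is a linear combination of $P$ and $Q$; the explicit form is, for $\mu = \mu_1\partial_u + \mu_2\partial_v$ with $\mu(0) = 0$,
\[
(\eta+\mu)^3 f(0) - \eta^3 f(0) = 3\bigl(a_0(\mu_1)_u(0)+b_0(\mu_1)_v(0)\bigr)P + 3\bigl(a_0(\mu_2)_u(0)+b_0(\mu_2)_v(0)\bigr)Q.
\]
Higher-order data of $\mu$ do not contribute, because $df_0 = 0$ makes the relevant terms vanish.

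The next key input links this correction to the Hesse quadric: differentiating $\lambda = \det(f_u, f_v)$ twice at $0$ and using $f_u(0) = f_v(0) = 0$ yields
\[
\eta^2\lambda(0) = 2\det(P,Q),
\]
so the Hesse quadric condition on $\eta(0)$ is exactly the linear dependence of $P$ and $Q$. When this holds, the span of $\{P,Q\}$ has dimension at most $1$; if $\eta^2 f(0) \neq 0$, that span is precisely the line through $\eta^2 f(0) = a_0 P + b_0 Q$, so the $1$-jet correction lies on this line and $\det(\eta^2 f,\eta^3 f)(0)$ is unchanged. If $\eta^2 f(0) = 0$, both determinants are trivially zero.

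For the rescaling freedom, a routine computation using $\eta f(0) = 0$ gives $(c\eta)^2 f(0) = c(0)^2 \eta^2 f(0)$ and $(c\eta)^3 f(0) = c(0)^3 \eta^3 f(0) + 3 c(0)^2 (\eta c)(0)\,\eta^2 f(0)$ for any function $c$ with $c(0) \neq 0$, so $\det((c\eta)^2 f,(c\eta)^3 f)(0) = c(0)^5 \det(\eta^2 f,\eta^3 f)(0)$. Combining the two invariances proves the lemma over both $\R$ and $\C$. The main obstacle is the bookkeeping in expanding $\eta^3 f(0)$ and isolating the $1$-jet correction as an element of the plane spanned by $P$ and $Q$; once this is set up, the identity $\eta^2\lambda(0) = 2\det(P,Q)$ makes the role of the Hesse quadric hypothesis immediate.
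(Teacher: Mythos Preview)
Your argument is correct and takes a genuinely different route from the paper. The paper first invokes Lemma~\ref{lem:coordchange010} to put $f$ into the normal form \eqref{eq:fchange005}, then computes $\xi^2 f(0)$ and $\xi^3 f(0)$ explicitly for an arbitrary vector field $\xi$ in that form, and observes by inspection that $\det(\xi^2 f,\xi^3 f)(0)$ depends only on $\xi(0)$ when $\xi(0)$ is one of the normalized Hesse solutions $(1,\pm1)$ or $(1,\pm i)$. You instead work coordinate-free: you decompose the freedom in $\eta$ into rescaling and addition of a vector field vanishing at $0$, and the key conceptual step is the identity $\eta^2\lambda(0)=2\det(P,Q)$, which translates the Hesse-quadric hypothesis directly into linear dependence of $P$ and $Q$. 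This makes transparent \emph{why} the $1$-jet correction to $\eta^3 f(0)$ lies in the line spanned by $\eta^2 f(0)$ and hence does not affect the determinant. Your approach avoids the normal-form reduction entirely; the paper's approach has the compensating advantage that the explicit computation simultaneously yields the formulas \eqref{eq:conddet}, which are reused immediately in the proof of Theorem~\ref{thm:cri}.
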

\begin{proof}
The condition clearly does not depend on the choice of
the coordinate system on the source space, and
the condition does not change by a linear coordinate
transformation on the target space.
As we remarked just after Lemma \ref{lem:coordchange010},
we may assume $f$ is written as in
\eqref{eq:fchange005}.
Then the identifier of singularities
is $-\ep u^2+v^2+O(3)$.
Since the condition does not change under
a constant multiplication of 
the solutions of the Hesse quadric $\eta_1$ and $\eta_2$
of an identifier of singularities,
we may assume
$\eta_1=(1,1),\eta_2=(1,-1)$ if $\ep=1$,
and
$\eta_1=(1,i),\eta_2=(1,-i)$ if $\ep=-1$, 
where $i=\sqrt{-1}$.
By a direct calculation,
\begin{align*}
\xi^2f&=
(2 \xi_1 \xi_2,\ep \xi_1^2+\xi_2^2),\\
\xi^3f&=
\Big(3 (\xi_2^2 (\xi_1)_v+\xi_1 \xi_2 ((\xi_2)_v+(\xi_1)_u)
+\xi_1^2 (\xi_2)_u),\\
 &\hspace{15mm}
a_{30} \xi_1^3+3 a_{21} \xi_1^2 \xi_2+3 a_{12} \xi_1 \xi_2^2
+a_{03} \xi_2^3\\
&\hspace{20mm}
+3 \ep \xi_1^2 (\xi_1)_u
+3 \ep \xi_1 \xi_2 (\xi_1)_v
+3 \xi_1 \xi_2 (\xi_2)_u
+3 \xi_2^2 (\xi_2)_v
\Big),
\end{align*}
holds at $u=v=0$,
where $\xi=(\xi_1,\xi_2)$ is a vector field, and
\begin{equation}\label{eq:conddet}
\dfrac{1}{2}\det(\xi^2f,\xi^3f)(0)
=\left\{
\begin{array}{l}
 a_{30} + 3 a_{21} + 3 a_{12} + a_{03}  \quad(\ep=1,\xi(0)=\eta_1=(1,1))\\
-a_{30} + 3 a_{21} - 3 a_{12} + a_{03} \quad(\ep=1,\xi(0)=\eta_2=(1,-1))\\
a_{30}i - 3 a_{21}- 3 i a_{12} +a_{03}  \quad(\ep=-1,\xi(0)=\eta_1=(1,i))\\
-a_{30}i - 3 a_{21}+ 3 i a_{12} +a_{03}  \quad(\ep=-1,\xi(0)=\eta_2=(1,-i)).\\
\end{array}
\right.
\end{equation}
The right-hand side of \eqref{eq:conddet} depends only on the value 
of $\xi(0)$. This shows the assertion.
\end{proof}
\begin{proof}[Proof of Theorem\/ {\rm \ref{thm:cri}}]
The sufficiency follows by the independence of
the choice of coordinate systems and vector fields.
We show the necessity.
We assume the assumption of Theorem \ref{thm:cri}.
Since 
the condition does not depend on the choice of 
coordinate systems and vector fields,
we may assume $f$ is written as in \eqref{eq:fchange005}.
Then by \eqref{eq:conddet} in the proof of Lemma \ref{lem:indepvf},
the condition \eqref{eq:cricond} in Theorem \ref{thm:cri}
is equivalent to
$
( a_{30} + 3 a_{21} + 3 a_{12} + a_{03})
(-a_{30} + 3 a_{21} - 3 a_{12} + a_{03})\ne0$ when $\ep=1$,
and
$a_{30}- 3 a_{12}\ne0$ or $3 a_{21}-a_{03}\ne0$ when $\ep=-1$.
By Lemma \ref{lem:condo3}, we have the assertion.
\end{proof}

\section{Geometry of a sharksfin and a deltoid}
\subsection{Geometric meanings of a criterion for a sharksfin}
We here give a geometric interpretation of condition \eqref{eq:cricond}.
Let $f:(\R^2,0)\to(\R^2,0)$ be a map-germ with
$\rank df_0=0$, and let an identifier of singularities
$\lambda$ have an index one critical point
at $0$.
Then the set $\lambda^{-1}(0)$ consists of
images of two transversal regular curves passing through $0$.
We set these curves as $\gamma_i:(\R,0)\to(\R^2,0)$.
A curve-germ $c:(\R,0)\to(\R^2,0)$ at $0$ is a $3/2$-{\it cusp\/}
if it is $\A$-equivalent to $t\mapsto (t^2,t^3)$.
It is well-known that $c:(\R,0)\to(\R^2,0)$ is a $3/2$-cusp
if and only if $c'(0)=0$, and $\det(c''(0),c'''(0))\ne0$.
Let $c:(\R,0)\to(\R^2,0)$ be a curve and $0$ a $3/2$-cusp.
The {\it cuspidal direction\/} of $c$ at $0$ is the direction
defined by $c''(0)$.
The cuspidal direction bisects the cusp.
Then the following proposition holds.
\begin{proposition}\label{prop:gamma12cusp}
Under the above setting, 
$f$ at\/  $0$ is a sharksfin if and only if\/ 
$f\circ \gamma_i$ $(i=1,2)$ at\/ $0$ are both\/ $3/2$-cusps.
\end{proposition}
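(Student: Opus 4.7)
First I would record the standard consequence of the Morse lemma that, when $\lambda$ has a non-degenerate critical point of index one at $0$, the sign condition $\det\hess(\lambda)(0)<0$ holds automatically and the zero set $\lambda^{-1}(0)$ consists of two regular branches whose tangent directions at $0$ are precisely the two $\R$-linearly independent solutions of the Hesse quadric of $\lambda$ at $0$. Consequently one may take the tangent vectors $\gamma_i'(0)$ of the two singular branches to serve as the values $\eta_i(0)$ appearing in Theorem \ref{thm:cri}, and the first sign condition in the sharksfin case of Theorem \ref{thm:cri} is already secured from the hypothesis.

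Next I would extend each $\gamma_i'(0)$ to a smooth vector field $\eta_i$ on a neighborhood of $0$ such that, after a suitable reparametrization of $\gamma_i$, the curve $\gamma_i$ is the integral curve of $\eta_i$ through $0$. By Lemma \ref{lem:indepvf} the value $\det(\eta_i^2 f,\eta_i^3 f)(0)$ depends only on $\eta_i(0)$, so this choice of extension costs nothing. With $\gamma_i$ an integral curve of $\eta_i$, a direct chain-rule computation gives $(f\circ\gamma_i)^{(k)}(0)=\eta_i^k f(0)$ for $k=1,2,3$: at $k=1$ this reads $df_0(\eta_i(0))=0$, which holds because $df_0=0$; at $k=2,3$ the auxiliary terms produced by differentiating along $\gamma_i$ are each proportional to $df_0$ and therefore vanish at $0$, so the higher-order identifications are clean.

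These identifications reduce the proposition to the standard $3/2$-cusp test. Recalling that a planar curve-germ $c$ is a $3/2$-cusp at $0$ precisely when $c'(0)=0$ and $\det(c''(0),c'''(0))\neq 0$, the previous step shows that $f\circ\gamma_i$ is a $3/2$-cusp at $0$ if and only if $\det(\eta_i^2 f,\eta_i^3 f)(0)\neq 0$. Imposing this for both $i=1,2$ is exactly condition \eqref{eq:cricond}, and in combination with $\det\hess(\lambda)(0)<0$ already in hand, Theorem \ref{thm:cri} yields the equivalence: $f$ is a sharksfin at $0$ if and only if both $f\circ\gamma_1$ and $f\circ\gamma_2$ are $3/2$-cusps at $0$.

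The main obstacle worth flagging is the chain-rule bookkeeping at $k=3$: one must expand $(f\circ\gamma_i)'''(0)$ carefully and verify, using both $df_0=0$ and the integral-curve relation $\gamma_i''(0)=(D\eta_i\cdot\eta_i)(0)$, that the result collapses to exactly $\eta_i^3 f(0)$. Apart from this short calculation, every remaining step is either an invocation of Theorem \ref{thm:cri} and Lemma \ref{lem:indepvf}, or a textbook fact about Morse saddle points and $3/2$-cusps.
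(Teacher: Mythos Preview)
Your argument is correct, but it proceeds differently from the paper's own proof. The paper passes immediately to the normal form \eqref{eq:fchange005} with $\ep=1$, writes the two singular branches explicitly as graphs $\gamma_\pm(t)=(t,a_\pm(t))$ with $a_\pm'(0)=\pm1$, computes $\hat\gamma_\pm''(0)$ and $\hat\gamma_\pm'''(0)$ by hand, and reads off $\det(\hat\gamma_\pm'',\hat\gamma_\pm''')(0)=2(a_{30}\pm3a_{21}+3a_{12}\pm a_{03})$, matching the expressions in \eqref{eq:conddet}. Your route is coordinate-free: you feed the tangent vectors of the singular branches back into Theorem~\ref{thm:cri} via the integral-curve identification $(f\circ\gamma_i)^{(k)}(0)=\eta_i^k f(0)$, and then invoke Lemma~\ref{lem:indepvf} to make the choice of extension irrelevant. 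The paper's computation is shorter in practice and yields the explicit cusp data needed later in Section~3.3; your argument is cleaner conceptually and makes the logical dependence on Theorem~\ref{thm:cri} transparent.

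One small remark: once $\gamma_i$ is genuinely an integral curve of $\eta_i$, the identity $(f\circ\gamma_i)^{(k)}=(\eta_i^k f)\circ\gamma_i$ holds exactly for all $k$ and all $t$, since $(g\circ\gamma_i)'=(\eta_i g)\circ\gamma_i$ for any function $g$ and one iterates. There are no ``auxiliary terms'' to kill with $df_0=0$, so the bookkeeping you flag at $k=3$ is actually vacuous. The hypothesis $df_0=0$ is used only to conclude $(f\circ\gamma_i)'(0)=0$, which is the first half of the $3/2$-cusp test.
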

\begin{proof}
Since the condition and the assertion do not depend on the choice
of the coordinate systems, we may assume that $f$ is written as
\eqref{eq:fchange005} with $\ep=1$.
Then we may assume that
$\gamma_1=\gamma_+=(t,a_+(t)), \gamma_2=\gamma_-=(t,a_-(t))$ ($a_+(0)=a_-(0)=0$).
Since $\lambda(t,a_\pm(t))=0$,
we have $a'_\pm(0)=\pm1$.
We set $\hat\gamma_i(t)=f\circ \gamma_i(t)$.
Then we see
$\hat\gamma_\pm''(0)=2(\pm1,1)$,
$\hat\gamma_1'''(0)=(3a''_\pm (0),3a''_\pm (0)\pm a_{30}+3a_{21}\pm 3a_{12}+a_{03})$.
Thus $$
\dfrac{1}{2}\det\big(\hat\gamma_\pm''(0),\hat\gamma_\pm'''(0)\big)=
a_{30}\pm 3a_{21}+3a_{12}\pm a_{03}.
$$
By \eqref{eq:conddet}, we have the assertion.
\end{proof}

\subsection{An ${\rm SO}(2)$-normal form}
We give a simplified form of a given rank zero germ
by using diffeomorphisms on the source and 
isometries on the target space.
Since coefficients of such forms are differential geometric
invariants, this is convenient for studying the differential geometry
of singularities. See 
\cite{bw,FH2,fukui,hhnuy,hnsuydual,MS,axial,OTflat,sajisw,ttohoku,WE} 
such studies, for example.
Two map-germs 
$f_i:(\R^2,0)\to(\R^2,0)$ are said to be 
{\it $\RR_+\times {\rm SO}(2)$-equivalent\/}
if there exist an orientation preserving
diffeomorphism-germ $\phi:(\R^2,0)\to(\R^2,0)$
and an orientation preserving isometry
$M$ on $(\R^2,0)$, namely, $M\in {\rm SO}(2)$, 
such that
$M\circ f_1=f_2\circ \phi$ holds.
\begin{proposition}\label{prop:normal}
Let\/  $f:(\R^2,0)\to(\R^2,0)$ be a map-germ with\/ 
$\rank df_0=0$, and let an identifier of singularities\/ 
$\lambda$ have a non-degenerate critical point
at\/  $0$.
Then\/  $f$ is\/  $\RR_+\times {\rm SO}(2)$-equivalent
to the germ
\begin{align}
&\left(uv,\dfrac{\ep_1a_{20} u^2}{2}+\dfrac{\ep_2 a_{20}v^2}{2}
+\dfrac{a_{30}u^3}{6}+
\dfrac{a_{03}v^3}{6}\right)+(O(4),O(4)),\label{eq:normald4}\\
&\hspace{10mm} \quad(a_{20}>0, (\ep_1,\ep_2)\in\{(1,1),(1,-1),(-1,1)\}).
\nonumber
\end{align}
\end{proposition}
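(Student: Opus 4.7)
The plan is to carry out three successive reductions, building on Lemma \ref{lem:coordchange010} and the coordinate-change techniques from the proof of Lemma \ref{lem:condo3}. By Lemma \ref{lem:coordchange010}, after an orientation-preserving source coordinate change and a target ${\rm SO}(2)$-rotation, we may assume $f=(uv, g)$ with $g\in O(2)$. Writing the 2-jet of $g$ as $g_2 = B_{20}u^2/2 + B_{11}uv + B_{02}v^2/2$, a direct computation gives $\det df = -B_{20}u^2 + B_{02}v^2 + O(3)$, so the non-degeneracy of $\lambda$ at $0$ is equivalent to $B_{20}B_{02}\neq 0$.

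The central step is to kill $B_{11}$ and normalize $|B_{20}|=|B_{02}|$ while preserving $f_1=uv$. Let $F_i=\hess(f_i)(0)$. A basic linear-algebra observation is that in any basis where $F_1$ equals the $uv$-Hessian matrix, the cross-term of $F_2$ equals $\trace(F_1^{-1}F_2)/2$; in particular it vanishes iff $\trace(F_1^{-1}F_2)=0$. Under a target rotation $M_\theta$, the Hessian pair transforms to $(A(\theta),B(\theta))$ with $A=F_1\cos\theta-F_2\sin\theta$ and $B=F_1\sin\theta+F_2\cos\theta$, and Jacobi's formula gives $\trace(A^{-1}B) = -D'(\theta)/D(\theta)$ where $D(\theta)=\det A(\theta)$. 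The function $D$ is smooth and $\pi$-periodic with $D(0)=\det F_1<0$, so its minimum over one period is attained at some $\theta_0$ with $D'(\theta_0)=0$ and $D(\theta_0)<0$. At $\theta_0$ the rotated Hessian $A(\theta_0)$ therefore still has signature $(1,1)$, and the trace condition holds. Factoring $A(\theta_0)$ as a product of two linear forms (with the factor order chosen to yield a positive Jacobian) provides a source $GL_+(2,\R)$-congruence that brings $A(\theta_0)$ to the $uv$-matrix and simultaneously makes $B(\theta_0)$ diagonal. The orientation-preserving $uv$-preserving rescaling $(u,v)\mapsto(\lambda u,v/\lambda)$ with $\lambda^4=|\tilde B_{02}/\tilde B_{20}|$ then equalizes the two quadratic coefficients, producing $a_{20}>0$ and signs $\ep_1,\ep_2\in\{\pm 1\}$. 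The case $(\ep_1,\ep_2)=(-1,-1)$ is converted to $(+1,+1)$ by the combination of a $\pi$-rotation on the target with the orientation-preserving source map $(u,v)\mapsto(v,-u)$, as a direct check shows.

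Finally, I remove the cubic cross-terms $u^2v$ and $uv^2$ from $f_2$ by applying the source coordinate change $u=u_1+Au_1^2+Bu_1v_1$, $v=v_1-Au_1v_1-Bv_1^2$, which has identity linear part (hence is orientation-preserving) and preserves $uv$ modulo $O(4)$; this is precisely the structural shape of the change used in the proof of Lemma \ref{lem:condo3}. Choosing $A=\tilde a_{12}/(2\ep_2 a_{20})$ and $B=-\tilde a_{21}/(2\ep_1 a_{20})$, where $\tilde a_{ij}$ denote the current cubic coefficients of $f_2$ and the denominators are nonzero since $a_{20}>0$, annihilates the $u_1^2v_1$ and $u_1v_1^2$ cubic terms of the new $f_2$. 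The remaining $u_1^3$ and $v_1^3$ coefficients define $a_{30}/6$ and $a_{03}/6$ as the invariants appearing in the normal form.

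The main obstacle is the second step: the stabilizer of $uv$ in $GL_+(2,\R)$ is only the one-parameter rescaling group, which does not act on $B_{11}$, so one must use the target ${\rm SO}(2)$-action nontrivially and then ensure that the subsequent source congruence simultaneously restores $f_1=uv$ and diagonalizes $f_2$. The pencil-trace / discriminant-extremum argument sketched above supplies this compatibility under the non-degeneracy hypothesis $B_{20}B_{02}\neq 0$.
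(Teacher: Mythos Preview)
Your step~2 is a genuinely different route from the paper's Lemma~\ref{lem:normal0200}: the paper writes an explicit ansatz $u=u_1+v_1$, $v=cu_1+dv_1$ together with a target rotation and solves the resulting three equations \eqref{eq:normal0300} for $(c,d,\theta)$ by hand, whereas you recast the vanishing of the cross-term as $\trace(A^{-1}B)=0$, rewrite this via Jacobi's formula as $D'(\theta)/D(\theta)=0$ for $D(\theta)=\det A(\theta)$, and pick $\theta_0$ to be a minimizer of the $\pi$-periodic function $D$ (which is automatically negative since $D(0)=\det F_1=-1$). That is an elegant invariant argument and avoids the explicit radicals in \eqref{eq:normal0700}.

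However, there is a real gap. You assert that the central step is carried out ``while preserving $f_1=uv$'', but your operations do not do this. Starting from $f=(uv,g)$, the target rotation $M_{\theta_0}$ replaces the first component by $\cos\theta_0\cdot uv-\sin\theta_0\cdot g$; the subsequent $GL_+(2,\R)$-congruence brings only its \emph{Hessian} back to the $uv$-matrix, i.e.\ restores the $2$-jet of $f_1$ to $uv$. The cubic (and higher) terms contributed by $-\sin\theta_0\cdot g$ survive, so after step~2 you only have $f_1=uv+O(3)$. Your step~3 change preserves $uv$ modulo $O(4)$, so it leaves these cubic terms in $f_1$ untouched, and you end with $f_1=uv+O(3)$ rather than the required $uv+O(4)$ of \eqref{eq:normald4}. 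Your two free parameters $A,B$ cannot absorb these four extra conditions.

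The repair is exactly what the paper inserts between its Lemma~\ref{lem:normal0200} and the cubic reduction: once the $2$-jet of $f_1$ is $uv$, apply the Morse lemma to obtain an orientation-preserving source diffeomorphism with identity linear part making $f_1=uv$ exactly. This does not disturb the quadratic part of $f_2$ (so your $\ep_1,\ep_2,a_{20}$ survive), and then your step~3 applies verbatim.
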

We show this proposition by taking $\RR_+\times {\rm SO}(2)$-equivalent germs
step by step.
First we show the following lemma.
\begin{lemma}\label{lem:normal0100}
Let\/  $f:(\R^2,0)\to(\R^2,0)$ be a map-germ satisfying the condition
in Proposition\/  {\rm \ref{prop:normal}}.
Then\/  $f$ is\/  $\RR_+\times {\rm SO}(2)$-equivalent
to the germ
\begin{equation}\label{eq:normal0100}
\left(uv,\dfrac{\ep_1a_{20} u^2}{2}+a_{11}uv+\dfrac{\ep_2a_{20}v^2}{2}+O(3)\right)
 \quad(a_{20}\ne0, \ep_1,\ep_2\in\{1,-1\}).
\end{equation}
\end{lemma}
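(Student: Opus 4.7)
The plan is to apply Lemma~\ref{lem:coordchange010} first, which brings $f$ into the form $(uv, h(u,v))$ with $h = O(2)$, and then to further normalize the quadratic part of $h$ using the remaining freedom in $\RR_+ \times {\rm SO}(2)$-equivalence. Writing $h = \dfrac{\alpha}{2}u^2 + \beta uv + \dfrac{\gamma}{2}v^2 + O(3)$, the Jacobian of $f$ equals $v h_v - u h_u = -\alpha u^2 + \gamma v^2 + O(3)$, whose Hessian at $0$ is the diagonal matrix with entries $-2\alpha$ and $2\gamma$; the non-degeneracy of the critical point of $\lambda$ therefore forces $\alpha\gamma \ne 0$.

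To make the $u^2$ and $v^2$ coefficients of the second component have equal absolute value, I would apply the orientation-preserving anisotropic rescaling $\phi(u,v) = (\mu u,\, v/\mu)$ with $\mu > 0$. This preserves $uv$ exactly, so it requires no compensating rotation on the target, and it sends the quadratic part of $h$ to $\dfrac{\mu^2 \alpha}{2}u^2 + \beta uv + \dfrac{\gamma/\mu^2}{2}v^2$. Choosing $\mu^4 = |\gamma/\alpha|$ then yields quadratic coefficients $\pm a_{20}$ with $a_{20} = \sqrt{|\alpha\gamma|} > 0$, bringing $f$ into the shape \eqref{eq:normal0100} for some $(\ep_1, \ep_2) \in \{\pm 1\}^2$.

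It remains to eliminate the case $(\ep_1, \ep_2) = (-1,-1)$. I would combine the orientation-preserving source rotation $\phi(u,v) = (-v, u)$ with the target rotation $M = -I \in {\rm SO}(2)$. A direct computation of $M \circ f \circ \phi^{-1}$ shows that the first component returns to $uv$ and that the quadratic part of the second component becomes $-\dfrac{\gamma}{2}u^2 + \beta uv - \dfrac{\alpha}{2}v^2$, so that $(\ep_1, \ep_2)$ is replaced by $(-\ep_2, -\ep_1)$. This sends $(-1,-1)$ to $(1,1)$ while fixing both $(1,-1)$ and $(-1,1)$, so the three listed sign pairs exhaust all possibilities.

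I do not expect a serious obstacle. The only point that requires care is checking at each step that the source diffeomorphism has positive Jacobian and that the target map genuinely lies in ${\rm SO}(2)$ rather than in ${\rm O}(2)\setminus {\rm SO}(2)$, so that no orientation-reversing move smuggles in; otherwise each step is a routine quadratic-coefficient computation.
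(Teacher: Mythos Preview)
Your proof is correct and essentially identical to the paper's: both apply Lemma~\ref{lem:coordchange010}, note that non-degeneracy of $\lambda$ forces the $u^2$ and $v^2$ coefficients of the second component to be non-zero, and then apply the orientation-preserving rescaling $(u,v)\mapsto(\mu u,\,v/\mu)$ with $\mu=|\gamma/\alpha|^{1/4}$ to equalize their absolute values. Your final paragraph eliminating $(\ep_1,\ep_2)=(-1,-1)$ is correct but not required by Lemma~\ref{lem:normal0100}, which allows all four sign pairs; that reduction belongs to Proposition~\ref{prop:normal}, and the paper carries it out there by essentially the same move you describe.
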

\begin{proof}
By Lemma \ref{lem:coordchange010},
we may assume $f$ is written as
$f(u,v)=(uv,a_{20}u^2/2+a_{11}uv+a_{02}v^2+O(3))$.
Since $0$ is a non-degenerate critical point of $\lambda$,
we see $a_{20}a_{02}\ne0$.
Then by setting 
$u=|a_{02}/a_{20}|^{1/4}u_1,v=|a_{20}/a_{02}|^{1/4}v_1$,
we see the assertion.
\end{proof}
\begin{lemma}\label{lem:normal0200}
Let\/  $f:(\R^2,0)\to(\R^2,0)$ be a map-germ satisfying the assumption
of Proposition\/  {\rm \ref{prop:normal}}.
Then\/  $f$ is\/  $\RR_+\times {\rm SO}(2)$-equivalent
to the germ
\begin{equation}\label{eq:normal0200}
\left(b_{11}uv,\dfrac{a_{20} u^2}{2}+\dfrac{a_{02}v^2}{2}\right)
+(O(3),O(3))
 \quad(b_{11}a_{20}a_{02}\ne0).
\end{equation}
\end{lemma}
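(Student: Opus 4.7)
Starting from the form given by Lemma~\ref{lem:normal0100}, namely
\[ f(u,v)=(uv,\ \ep_1au^2/2+a_{11}uv+\ep_2av^2/2)+O(3) \]
with $a:=a_{20}\ne0$, the task is to eliminate the $a_{11}uv$ cross term from $f_2$ (allowing $f_1$ to acquire a general coefficient $b_{11}$ in front of $uv$) by combining an orientation-preserving source linear change $L\in GL^+(2,\R)$ with a target rotation $R_\theta\in{\rm SO}(2)$. On the pair of quadratic parts $(Q_1,Q_2)$ of $f$, the target rotation acts by the linear mixing $(cQ_1-sQ_2,\,sQ_1+cQ_2)$ with $c=\cos\theta,\,s=\sin\theta$, and $L$ acts by pullback; imposing that the new $Q_1$ contains no $\tilde u^2,\tilde v^2$ terms and the new $Q_2$ contains no $\tilde u\tilde v$ term gives three scalar conditions on the five parameters of $(L,\theta)$, so a two-dimensional solution family is expected generically.

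I would solve this system by case analysis on $\ep_1\ep_2\in\{+1,-1\}$. For $\ep_1\ep_2=-1$, the ansatz $L=R_\phi$ (a source rotation) suffices: the first two conditions become compatible owing to the trace-free structure of $Q_2$, and reduce to a single equation of the form $\tan 4\phi=-2a a_{11}/(1+a_{11}^2-a^2)$, which always has a real solution $\phi$; the third condition then determines $\theta$. For $\ep_1=\ep_2$, a source rotation alone is insufficient (it forces $\ep_1a=0$), and I would instead use the symmetric ansatz $L=\begin{pmatrix}1&\beta\\\beta&1\end{pmatrix}$ with $|\beta|<1$ (so $\det L=1-\beta^2>0$); the three conditions then collapse to a quartic equation in $\beta$ whose value at $\beta=0$ equals $aa_{11}\ne0$, while its linear coefficient $2(1+a^2+a_{11}^2)\beta$ makes the quartic change sign in a neighborhood of $0$, guaranteeing a real root in $(-1,1)$, with $\theta$ determined by the remaining equation.

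The main obstacle is the case analysis together with verifying that the resulting coefficients $b_{11},\,a_{20},\,a_{02}$ are all nonzero; this is a direct polynomial computation once the above ans\"atze are chosen, and follows from the fact that vanishing of any of them would force $L$ to be singular or to satisfy a contradictory relation. Orientation-preservation of $L$ is automatic from the construction ($\det R_\phi=1$ in one case, $\det L=1-\beta^2>0$ in the other).
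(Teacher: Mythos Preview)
Your strategy is genuinely different from the paper's. The paper does \emph{not} split cases on $\ep_1\ep_2$: it uses a single ansatz $u=u_1+v_1$, $v=cu_1+dv_1$ together with a target rotation, observes that the two ``no $\tilde u^2$, no $\tilde v^2$'' conditions are the \emph{same} quadratic in $c$ and in $d$, and then uses Vi\`ete's relations ($c+d$, $cd$) to reduce the third condition to an explicit quadratic in $t=\cot\theta$, whose discriminant is manifestly positive. Your case split trades this uniform argument for two simpler ans\"atze (a source rotation when $Q_2$ is trace-free, a symmetric shear when it is not); this is perfectly legitimate and arguably more geometric.

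However, your existence argument in the case $\ep_1=\ep_2$ has a gap. With $L=\begin{pmatrix}1&\beta\\\beta&1\end{pmatrix}$ and $a:=\ep_1 a_{20}$, the three conditions do collapse (after eliminating $\theta$) to
\[
p(\beta)=aa_{11}\beta^4+2(1+a^2+a_{11}^2)\beta^3+6aa_{11}\beta^2+2(1+a^2+a_{11}^2)\beta+aa_{11}=0,
\]
but your claim that ``the linear coefficient makes the quartic change sign in a neighbourhood of $0$'' is not a valid deduction: since $p(0)=aa_{11}\ne0$, the polynomial does \emph{not} change sign near $0$, and the location of the root of the linearisation tells you nothing rigorous about the quartic itself. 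The clean fix is to evaluate at the endpoints:
\[
p(1)=4\bigl(1+(a+a_{11})^2\bigr)>0,\qquad p(-1)=-4\bigl(1+(a-a_{11})^2\bigr)<0,
\]
so the intermediate value theorem gives a root $\beta\in(-1,1)$, hence $\det L=1-\beta^2>0$ as required. (Incidentally, $p$ is palindromic, so the roots come in pairs $\beta,1/\beta$; this explains why exactly two of the four roots lie in $(-1,1)$.) With this correction your proof goes through; the nonvanishing of the resulting $b_{11}$ follows from the identity $b_{11}\propto a(1-\beta^2)^2/2\ne0$, and that of $a_{20},a_{02}$ from the nondegeneracy of $\lambda$, exactly as in the paper.
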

\begin{proof}
By Lemma \ref{lem:normal0100},
we may assume $f$ is written as \eqref{eq:normal0100}.
If $a_{11}=0$, then no proof is needed. We assume $a_{11}\ne0$.
We set $u=u_1+v_1$, $v=cu_1+dv_1$ $(c,d\in\R)$, and
$M$ is the rotation matrix by degree $\theta$.
Then $M\circ f(u,v)$ is
\begin{align}
\label{eq:normal0250}
&\Big(
(2 c \cos\theta - (a_{20} + 2 a_{11} c + a_{20} c^2) \sin\theta)\dfrac{u^2}{2} 
+*uv\\
&\hspace{10mm}+
(2 d \cos\theta - (a_{20} + 2 a_{11} d + a_{20} d^2) \sin\theta)\dfrac{v^2}{2},
\nonumber\\
&\hspace{1mm}
*u^2
+((a_{20} + a_{20} c d + a_{11} (c + d)) \cos\theta + (c + d) \sin\theta)uv
+*v^2\Big)+(O(3),O(3)),
\nonumber
\end{align}
where $*$ stands for a real number which will not be needed
in later calculations.
To show the lemma, we need to solve the equation
\begin{equation}\label{eq:normal0300}
\left\{
\begin{array}{l}
2 c \cos\theta - (a_{20} + 2 a_{11} c + a_{20} c^2) \sin\theta =0,\\
2 d \cos\theta - (a_{20} + 2 a_{11} d + a_{20} d^2) \sin\theta=0,\\
(a_{20} + a_{20} c d + a_{11} (c + d)) \cos\theta + (c + d) \sin\theta=0
\end{array}
\right.
\end{equation}
with respect to $c,d,\theta$.
If $\sin\theta=0$, then $c=d=0$, and the third equation of
\eqref{eq:normal0300} is $a_{20}=0$.
Thus $\sin\theta=0$ is not a solution.
We assume $\sin\theta\ne0$.
Noticing the function $\cos\theta/\sin\theta$ takes 
values in $\R$, we set $t=\cos\theta/\sin\theta$.
Then \eqref{eq:normal0300} is equivalent to
\begin{align}
2 c t - (a_{20} + 2 a_{11} c + a_{20} c^2)  =0,\label{eq:normal0400}\\
2 d t - (a_{20} + 2 a_{11} d + a_{20} d^2) =0,\label{eq:normal0500}\\
(a_{20} + a_{20} c d + a_{11} (c + d)) t + c + d =0.\label{eq:normal0600}
\end{align}
Note that the equations \eqref{eq:normal0400} and \eqref{eq:normal0500}
are the same.
Thus the solutions $c,d$ are the two solutions of the equation 
\begin{equation}\label{eq:normal0650}
-a_{20} x^2 +2(-a_{11} + t)x-a_{20}=0
\end{equation}
with respect to $x$.
We set $c,d$ are these two solutions satisfying $d-c>0$,
where we will see \eqref{eq:normal0650} has distinct real roots.
Hence $c+d=2(-a_{11}+t)/a_{20}$ and $cd=1$.
Substituting this into \eqref{eq:normal0600},
we obtain one of the solutions
\begin{equation}\label{eq:normal0700}
t=\dfrac{-(1-a_{11}^2+a_{20}^2)+\sqrt{(1-a_{11}^2+a_{20}^2)^2+4a_{11}^2}}{2a_{11}},
\end{equation}
where $(1-a_{11}^2+a_{20}^2)^2+4a_{11}^2>0$ is obvious.
Substituting this into \eqref{eq:normal0400} and \eqref{eq:normal0500},
we obtain solutions $c$ and $d$.
Now we see that the equation 
\eqref{eq:normal0650} has two distinct real roots under
\eqref{eq:normal0700}.
However this can be easily shown from
$$
(-a_{11}+t)^2-a_{20}^2>\dfrac{(1-a_{11}^2+a_{20}^2)^2+4a_{11}^2}{4a_{11}^2}.
$$
Finally, 
since $0$ is a non-degenerate critical point of $\lambda$,
neither the coefficients of 
$uv$ in the first component nor those of 
$u^2,v^2$ in the second component
vanish.
\end{proof}
\begin{proof}[Proof of Proposition\/  {\rm \ref{prop:normal}}]
By Lemma \ref{lem:normal0200},
we may assume $f$ is written as in \eqref{eq:normal0200}.
By the Morse lemma, there exists a coordinate change
$u=au_1+bv_1+O(2),v=cu_1+dv_1+O(2)$ such that 
the first component of $f$ in \eqref{eq:normal0200}
written by the coordinate system $(u_1,v_1)$
is $u_1v_1$.
Then we have $ac=bd=0$, and we may assume $b=c=0$,
since the case $a=d=0$ is similar.
If the coordinate change reverses the orientation, we
compose with $(u,v)\mapsto (-u,v)$.
This implies that we may assume $f$ is written as
$$
\left(uv,\dfrac{a_{20} u^2}{2}+\dfrac{a_{02}v^2}{2}
+
\dfrac{a_{30}u^3}{6}+
\dfrac{a_{21}u^2v}{2}+
\dfrac{a_{12}uv^2}{2}+
\dfrac{a_{03}v^3}{6}+O(3)
\right)
 \quad(a_{20}a_{02}\ne0).
$$
If the first component is $-uv$, then we take a $\pi$-rotation
on the target. 
By the coordinate change
$$
u=u_{1}+\dfrac{a_{12} u_{1}^2}{2 a_{02}}-\dfrac{a_{21} u_{1} v_{1}}{2 a_{20}},\quad
v=v_{1}-\dfrac{a_{12} u_{1} v_{1}}{2 a_{02}}+\dfrac{a_{21} v_{1}^2}{2 a_{20}},
$$
$f$ is written as 
$$
\left(u_1v_1,
\dfrac{a_{20} u_1^2}{2}+\dfrac{a_{02}v_1^2}{2}+
\dfrac{\tilde{a}_{30}u_1^3}{6}+\dfrac{\tilde{a}_{03}v_1^3}{6}\right)+(O(4),O(4)).
$$
By setting 
$u_1=|a_{02}/a_{20}|^{1/4}u_2,v_1=|a_{20}/a_{02}|^{1/4}v_2$,
and taking $(u,v)\mapsto(v,-u)$ and $\pi$-rotation on the target
if $a_{20}<0$ and $a_{02}<0$, 
we see the assertion.
\end{proof}
We remark that the number of coefficients of the terms 
whose degrees are less than or equal to $3$ in the
form \eqref{eq:normald4} in Proposition \ref{prop:normal} is $3$.
However, coefficients in the first component remain.
When considering higher degree terms, it should be remarked that
the form \eqref{eq:normal0100} in Lemma \ref{lem:normal0100} is convenient,
since its first component is just $uv$.
\subsection{Geometric meaning of coefficients of the ${\rm SO}(2)$-normal form}
Let $f:(\R^2,0)\to(\R^2,0)$ at $0$ be a sharksfin.
In Proposition \ref{prop:gamma12cusp}, we showed that the images of 
two branch curves of $S(f)$ are both $(2,3)$-cusps.
Here we give an interpretation of the coefficients 
$a_{20},a_{30},a_{03}$ in the ${\rm SO}(2)$-normal form \eqref{eq:normald4}
by geometries of these $(2,3)$-cusps.
As in the proof of Proposition \ref{prop:gamma12cusp},
we denote two branch curves of $S(f)$ by $\gamma_i$ $(i=1,2)$
and set $\hat\gamma_i=f\circ\gamma$.
By the proof of Proposition \ref{prop:gamma12cusp},
the cuspidal directions of $\hat\gamma_i$ $(i=1,2)$ at $0$ are 
linearly independent.
Thus the angle between two cuspidal directions of $\hat\gamma_i$ $(i=1,2)$
is a geometric invariant of $f$.
On the other hand, let $c:(\R,0)\to(\R^2,0)$ at $0$ be a $3/2$-cusp.
The {\it cuspidal curvature\/} of $c$ at $0$ is
defined by
$$
\kappa_{cusp}(c)
=
\dfrac{\det(c''(0), c'''(0))}
{|c''(0)|^{5/2}}.
$$
The cuspidal curvature measures the wideness of the $3/2$-cusp.
See \cite{suyo,umecusp} for details.
Since $\hat\gamma_i$ $(i=1,2)$ have $3/2$-cusps at $0$,
one can compute the cuspidal curvatures.
We may assume that $f$ is written as in
\eqref{eq:normald4}.
Then in the same notation and by the same arguments as 
in the proof of Proposition \ref{prop:gamma12cusp},
we have
$a_\pm=\pm t+(a_{30}-a_{03})t^2/(4a_{20})+O(3)$,
and
$$
\hat\gamma_\pm''(0)=(\pm 2,2 a_{20}),\quad
\hat\gamma_\pm'''(0)=
\left( \dfrac{3 (\pm a_{30}- a_{03})}{2 a_{20}},
\dfrac{5 a_{30}\mp a_{03}}
{2}\right).
$$
Thus 
\begin{equation}\label{eq:kappacusp}
\kappa_{cusp}(\hat\gamma_\pm)
=
\dfrac{2 (\pm a_{30}+a_{03})}{(4+4 a_{20}^2)^{5/4}},
\end{equation}
and the angle $\theta_\gamma$ between the two cuspidal directions of 
$\hat\gamma_i$ $(i=1,2)$
is
\begin{equation}\label{eq:cuspangles}
\cos^{-1}\left(
\dfrac{|a_{20}^2-1|}
{a_{20}^2+1}
\right).
\end{equation}
By \eqref{eq:kappacusp}, \eqref{eq:cuspangles} and $a_{20}>0$,
together with
Proposition \ref{prop:normal},
the cuspidal curvatures of $\hat\gamma_\pm$
and the angle between the two cuspidal directions
determines the sharksfin up to three degrees,
namely, 
$\kappa_{cusp}(\hat\gamma_\pm)$ and $\theta_\gamma$
determines the $\RR_+\times {\rm SO}(2)$-class of sharksfins up to
$3$-jets.
It is known that $f(S(f))$ determines the $\RR$-class
of a sharksfin, since it is a critical normalization
\cite{dupgafwil,wirth}.
On the other hand, a deltoid is not a critical normalization,
and we cannot find such invariants in terms of $f(S(f))=\{0\}$.
\subsection{Projection of a Whitney umbrella}\label{sec:whit}
Let $f:(\R^2,0)\to(\R^3,0)$ be a {\it Whitney umbrella\/}
or equivalently, a {\it cross cap}, 
namely, it is $\A$-equivalent to the germ
$(u,v)\mapsto(u,v^2,uv)$.
A line generated by $df_0(T\R^2)$ is called the {\it tangent line\/}
of $f$, and a plane $P$ perpendicular to the tangent line
is called the {\it normal plane}.
Let $\pi:\R^3\to P$ be the orthogonal projection.
It is known that if $f$ is a Whitney umbrella, then
$\pi\circ f$ is a sharksfin or a deltoid, generically.
More precisely, 
if $f:(\R^2,0)\to(\R^3,0)$ is a Whitney umbrella, then
there exist a coordinate system $(u,v)$ and a rotation $\Phi$
on $\R^3$ such that
\begin{equation}\label{eq:whitney}
f(u,v)=\left(
u,uv+\dfrac{c_3v^3}{6},
\sum_{i+j=2}^3 \dfrac{d_{ij}u^iv^j}{i!j!}\right)
+(0,O(4),O(4))\quad(c_3,d_{02}>0,d_{ij}\in\R).
\end{equation}
See \cite{WE} or \cite{FH2}.
A Whitney umbrella is said to be {\it elliptic\/} 
(respectively, {\it hyperbolic\/}) if 
$d_{20}>0$ (respectively, $d_{20}<0$).
See \cite{bw,FH2,hhnuy,ttohoku,WE} for the geometric meanings
of other coefficients.
Considering the orthogonal projection with respect to the tangent line
into the normal plane,
a rank zero singular point appears.
A saharksfin (respectively, deltoid)
appears on the projection of an elliptic (respectively, hyperbolic)
Whitney umbrella.
We give a precise condition for this fact in terms of the
coefficients $c_3,d_{ij}$ $(i+j=2,3)$ in \eqref{eq:whitney}, namely,
geometric information of the Whitney umbrella. 
\begin{theorem}\label{eq:whitproj}
Let\/ $f$ be a Whitney umbrella written in the form\/
\eqref{eq:whitney} with\/ $d_{20}\ne0$.
Let\/ $\pi:\R^3\to \R^2$ be the orthogonal 
projection\/ $(X_1,X_2,X_3)\mapsto(X_2,X_3)$.
Then\/ $\pi\circ f$ at\/ $0$ is a sharksfin if and only if\/
$d_{20}>0$ and
\begin{equation}\label{eq:sharksfincond}
  d_{30} \tilde{d}_{02}^3 
+3\delta d_{21} \tilde{d}_{20} \tilde{d}_{02}^2
+ (3 d_{12} \tilde{d}_{20}^2- c_3 \tilde{d}_{20}^4)\tilde{d}_{02} 
+\delta (d_{03} - d_{11} c_3) \tilde{d}_{20}^3 
\ne0
\end{equation}
hold for both $\delta=1$ and $\delta=-1$, 
where\/ $d_{20}=\tilde{d}_{20}^2$, $d_{02}=\tilde{d}_{02}^2$.
On the other hand,
$\pi\circ f$ at\/ $0$ is a deltoid if and only if\/
$d_{20}<0$ and
\begin{equation}\label{eq:deltoidcond}
d_{30} \tilde{d}_{02}^2 -3 d_{12} \tilde{d}_{20}^2-c_3 \tilde{d}_{20}^4
\ne0\quad\text{or}\quad
d_{03} \tilde{d}_{20}^2-3 d_{21} \tilde{d}_{02}^2-d_{11} c_3 \tilde{d}_{20}^2
\ne0,
\end{equation}
where\/ $d_{20}=-\tilde{d}_{20}^2$, $d_{02}=\tilde{d}_{02}^2$.
\end{theorem}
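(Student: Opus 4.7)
The plan is to apply Theorem~\ref{thm:cri} directly to the composition $F := \pi \circ f : (\R^2,0) \to (\R^2,0)$. From the normal form \eqref{eq:whitney} every component of $F$ is $O(2)$, so $\rank dF_0 = 0$ is automatic; computing the Jacobian at lowest order gives
\[
\det dF = d_{02} v^2 - d_{20} u^2 + O(3),
\]
so I take $\lambda := -d_{20} u^2 + d_{02} v^2 + O(3)$ as identifier of singularities. Its Hesse matrix at $0$ is diagonal with determinant $-4 d_{20} d_{02}$, and since $d_{02} = \tilde{d}_{02}^{\,2} > 0$ this has the opposite sign of $d_{20}$. Consequently the sharksfin-versus-deltoid dichotomy of Theorem~\ref{thm:cri} matches the elliptic/hyperbolic dichotomy of the Whitney umbrella. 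The Hesse quadric $-d_{20} x^2 + d_{02} y^2 = 0$ admits the real solutions $\eta_\delta = (\tilde{d}_{02}, \delta \tilde{d}_{20})$ ($\delta = \pm 1$) when $d_{20} > 0$, and the complex-conjugate pair $(\tilde{d}_{02}, \pm i \tilde{d}_{20})$ when $d_{20} < 0$.

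For a constant vector field $\eta = (a,b)$ the relevant directional derivatives of $F$ can be read off from \eqref{eq:whitney}:
\begin{align*}
\eta^2 F(0) &= \bigl(2ab,\ a^2 d_{20} + 2ab\, d_{11} + b^2 d_{02}\bigr), \\
\eta^3 F(0) &= \bigl(c_3 b^3,\ a^3 d_{30} + 3a^2 b\, d_{21} + 3ab^2\, d_{12} + b^3 d_{03}\bigr).
\end{align*}
Expanding the $2\times 2$ determinant yields the polynomial
\[
\Delta(a,b) = 2a^4 b\, d_{30} + 6a^3 b^2\, d_{21} + 6a^2 b^3\, d_{12} + 2ab^4\, d_{03} - a^2 b^3 c_3 d_{20} - 2ab^4 c_3 d_{11} - b^5 c_3 d_{02},
\]
and it remains to substitute the two Hesse quadric solutions.

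In the elliptic case, substituting $(a,b) = (\tilde{d}_{02}, \delta \tilde{d}_{20})$ together with $d_{20} = \tilde{d}_{20}^{\,2}$, $d_{02} = \tilde{d}_{02}^{\,2}$, the two $c_3$-terms $-a^2 b^3 c_3 d_{20}$ and $-b^5 c_3 d_{02}$ combine into a single $-2\delta c_3 \tilde{d}_{02}^{\,2}\tilde{d}_{20}^{\,5}$, and after factoring out the nonzero constant $2\delta \tilde{d}_{02}\tilde{d}_{20}$ the remaining bracket is exactly the left-hand side of \eqref{eq:sharksfincond}. Requiring $\Delta\ne 0$ for both $\delta = \pm 1$ as demanded by Theorem~\ref{thm:cri} is therefore equivalent to \eqref{eq:sharksfincond}. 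In the hyperbolic case I substitute $(a,b) = (\tilde{d}_{02}, i\tilde{d}_{20})$ with $d_{20} = -\tilde{d}_{20}^{\,2}$; the analogous combination of the two $c_3$-terms now yields $-2i c_3 \tilde{d}_{02}^{\,2} \tilde{d}_{20}^{\,5}$, and separating $\Delta$ into real and imaginary parts gives nonzero multiples ($2\tilde{d}_{02}^{\,2}\tilde{d}_{20}$ and $2\tilde{d}_{02}\tilde{d}_{20}^{\,2}$, respectively) of the first and second bracketed expressions in \eqref{eq:deltoidcond}. Since $(\tilde{d}_{02}, -i\tilde{d}_{20})$ yields the complex conjugate determinant, the two nonvanishing conditions of Theorem~\ref{thm:cri} collapse to the ``or'' in \eqref{eq:deltoidcond}.

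The only real obstacle is careful polynomial bookkeeping, especially tracking signs and the combination of the $c_3 d_{20}$- and $c_3 d_{02}$-contributions in the complex hyperbolic case; once this is carried out, Theorem~\ref{thm:cri} delivers both conclusions immediately.
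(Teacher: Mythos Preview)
Your argument is correct and follows essentially the same route as the paper's proof: both apply Theorem~\ref{thm:cri} directly to $\pi\circ f$, read off $\lambda=-d_{20}u^2+d_{02}v^2+O(3)$, take the Hesse-quadric vectors $(\tilde d_{02},\pm\tilde d_{20})$ (respectively $(\tilde d_{02},\pm i\tilde d_{20})$), and evaluate $\det(\eta^2F,\eta^3F)(0)$. The only slip is a harmless labeling swap in the hyperbolic case --- the \emph{imaginary} part of $\Delta$ is $2\tilde d_{02}^{\,2}\tilde d_{20}$ times the first bracket of~\eqref{eq:deltoidcond} and the \emph{real} part is $2\tilde d_{02}\tilde d_{20}^{\,2}$ times the second --- which does not affect the conclusion.
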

We remark that the $\pm$-ambiguity of 
$\tilde{d}_{20}^2$, $\tilde{d}_{02}^2$ is
included by the condition \eqref{eq:sharksfincond} holding 
for both $\delta=\pm1$.
\begin{proof}
Let $\lambda$ be an identifier of singularities of $\pi\circ f$.
If $d_{20}>0$ (note that $d_{02}>0$ in \eqref{eq:whitney})
then $\lambda= -\tilde{d}_{20}^2 u^2+\tilde{d}_{02}^2 v^2$.
In this case, we set
$\eta_1=\eta_+=(\tilde{d}_{02}, \tilde{d}_{20})$,
$\eta_2=\eta_-=(-\tilde{d}_{02}, \tilde{d}_{20})$.
Then we see
\begin{align*}
\eta_\pm\eta_\pm (\pi\circ f)(0)=&
\big(2 \tilde{d}_{20}\tilde{d}_{02} ,\,
2 \tilde{d}_{20} \tilde{d}_{02} 
(d_{11}+\tilde{d}_{20}\tilde{d}_{02} )\big),\\
\eta_\pm\eta_\pm\eta_\pm (\pi\circ f)(0)=&
\big(c_3 \tilde{d}_{20}^3,\,
d_{30} \tilde{d}_{02}^3
+ 3 d_{21} \tilde{d}_{20}\tilde{d}_{02}^2
+3 d_{12} \tilde{d}_{20}^2 \tilde{d}_{02}
+d_{03} \tilde{d}_{20}^3\big).
\end{align*}
By a direct calculation, we have the assertion.
One can obtain the case of $d_{20}<0$,
under a similar calculation by setting 
$\eta_1=\eta_+=(i\tilde{d}_{02}, \tilde{d}_{20})$,
$\eta_2=\eta_-=(-i\tilde{d}_{02}, \tilde{d}_{20})$,
where $i=\sqrt{-1}$.
\end{proof}
The ${\rm SO}(2)$-normal form of $\pi\circ f$ is
(we allow $(\ep_1,\ep_2)=(-1,-1)$ in \eqref{eq:normald4})
\begin{align}
&\Bigg(uv,\ \dfrac{w_3}{2|w_1|} 
\left|\dfrac{w_2}{w_3}\right|^{1/2}u^2
+\dfrac{w_2}{2|w_1|} \left|\dfrac{w_3}{w_2}\right|^{1/2}v^2+
\dfrac{w_4}{6}u^3+\dfrac{w_5}{6}v^3\Bigg)+(O(4),O(4)),
\end{align}
where 
\begin{align*}
w_1&=
-2 ((d_{11} - \cot \theta) \cos \theta + (-d_{11}^2 + d_{02} d_{20} + d_{11} \cot \theta) \sin \theta)/d_{02},\\
w_2&=
2(d_{11} + \cot \theta - x_1) (\cot \theta \cos \theta + \sin \theta)/d_{02},\\
w_3&=
2(-d_{11} + \cot \theta + x_1) (\cot \theta \cos \theta + \sin \theta)/d_{02},
\end{align*}
and $\theta$ is an angle satisfying that
$$
\cot\theta=
\dfrac{-1+d_{11}^2-d_{02} d_{20}
+\sqrt{4 d_{11}^2+(1-d_{11}^2+d_{02} d_{20})^2}}{2 d_{11}}.
$$
One can obtain the coefficients $w_4$ and $w_5$ by 
following the procedure
of the proof of Proposition \ref{prop:normal}.
However the formula is quite complicated, and we do not
state it here.
\subsection{Planar motions}\label{sec:plmo}
As an application of the criteria, we give a concrete condition 
that a singular point appearing on a planar motion
is a sharksfin.

Let $S^1$ be the $1$-dimensional torus $S^1=\R/2\pi\Z$, and
let $SE(2)$ be the 3-dimensional Lie group
$$
SE(2)=\R^2\rtimes {\rm SO}(2)=
\left\{
(a,A)\; 
\Big|\;
A=\left(
\begin{array}{cc}
\cos \theta & -\sin\theta \\
\sin \theta & \cos \theta
\end{array}
\right),\;
a\in \R^2,
\theta\in S^1
\right\}.
$$
We take a map-germ
$\alpha\colon (\R^2,0)\to SE(2)$,
which is called a {\it planar motion-germ with\/ $2$-degrees of freedom\/} in 
the context here, see \cite{gibsonmarshxiang} for detail.
Take a point $\omega\in \R^2$,
and set a map-germ 
$
ev_\omega \colon (SE(2),(a_0,A_0))\rightarrow \R^2$
by
$ev_\omega(a,A)=A\omega + a$.
Then the composition 
$ev_\omega\circ\alpha\colon (\R^2,0)\rightarrow\R^2$
traces the point $w$ by the action of $\alpha(s)$, and
is called {\it a trajectory of\/  $\omega$ by\/  $\alpha$}:
$$
ev_\omega\circ\alpha(s)=A(s)\omega+a(s),
$$
where $\alpha(s)=(a(s),A(s))$.
In \cite{gibsonmarshxiang}, a generic classification of
singularities of
$ev_\omega\circ\alpha$ at $0$ 
is given when $\alpha$ at $0$
is a Whitney umbrella, namely, it is 
$\A$-equivalent to $(u,v)\mapsto(u,v^2,uv)$.
Here, we consider a special case of the above motion.
Let 
$\alpha\colon (\R,0)\rightarrow SE(2)$ and
$\beta\colon (\R,0)\rightarrow SE(2)$ be two curves.
Then the {\it composite motion\/ } 
of 
$\alpha$ and $\beta$ is defined by
$$
\nu(u,v)=\beta(v) \alpha(u)
:(\R,0)\times(\R,0)\rightarrow SE(2)
$$
where the product $\beta(v) \alpha(u)$ is that of $SE(2)$.
Composite motions are planar motions with
$2$-degrees of freedom.
In \cite{gibsonmarshxiang}, a generic classification of
singularities of $ev_\omega\circ\nu$ at $0$ is given,
where a sharksfin is in the classification.
It is also shown that a deltoid never appears on $ev_\omega\circ\nu$.
We give a concrete condition for
$ev_\omega\circ\nu$ to be a sharksfin
in terms of the geometry of $\omega$ and $\alpha, \beta$, by
using our criterion (Theorem \ref{thm:cri})
when $\alpha:(\R,0)\to(SE(2),(0,E))$ and 
$\beta:(\R,0)\to(SE(2),(0,E))$ have singular points at $0$.
We identify ${\rm SO}(2)$ with $S^1$ and we set
$\alpha(u)=((\tilde a_1(u),\tilde a_2(u)),\tilde p(u))$ and
$\beta(u)=((\tilde b_1(v),\tilde b_2(v)),\tilde q(v))$,
where $(\tilde a_1(u),\tilde a_2(u)),(\tilde b_1(v),\tilde b_2(v))\in\R^2$, 
$\tilde p(u),\tilde q(v)\in S^1$.
By the assumption, 
we can write
$\alpha(u)=((u^2a_1(u),u^2a_2(u)),u^2p(u))$ and
$\beta(v)=((v^2b_1(v),v^2b_2(v)),v^2q(v))$.
\begin{theorem}
Under the above notation,
$0$ is a corank\/  $2$ singular point of\/  $f=ev_\omega\circ\nu$.
Furthermore, $ev_\omega\circ\nu$ at\/  $0$ is\/  
a sharksfin if and only if
\begin{align}
\det\pmt{1&w_2&-w_1\\p(0)&a_1(0)&a_2(0)\\q(0)&b_1(0)&b_2(0)}\ne0,&\\
\det\pmt{1&w_2&-w_1\\p(0)&a_1(0)&a_2(0)\\p'(0)&a_1'(0)&a_2'(0)}
\ne0,&\quad
\det\pmt{1&w_2&-w_1\\q(0)&b_1(0)&b_2(0)\\q'(0)&b_1'(0)&b_2'(0)}
\ne0.
\end{align}
\end{theorem}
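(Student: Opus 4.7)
The plan is to derive the third-order Taylor expansion of $f=ev_\omega\circ\nu$ at $0$, observe that $f$ has corank two, and then apply Theorem~\ref{thm:cri} with the natural vector fields $\eta_1=\partial_u$ and $\eta_2=\partial_v$.

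Unwinding the composite motion via the semidirect product law in $SE(2)$ and using the commutativity of ${\rm SO}(2)$, one finds
\begin{equation*}
f(u,v)=v^2b(v)+R(v^2q(v))\,u^2a(u)+R\bigl(u^2p(u)+v^2q(v)\bigr)\,\omega,
\end{equation*}
where $a=(a_1,a_2)$, $b=(b_1,b_2)$, and $R(\theta)\in{\rm SO}(2)$ is rotation by $\theta$. Every ``mixed'' contribution to $f$ (i.e., every term involving both $u$ and $v$) comes with at least one extra factor $u^2p(u)$ or $v^2q(v)$, hence is $O(4)$. Writing $J\omega=(-w_2,w_1)$ for the rotation of $\omega$ by $\pi/2$, the expansion therefore collapses to
\begin{equation*}
f(u,v)-\omega=C_{20}u^2+C_{30}u^3+C_{02}v^2+C_{03}v^3+O(4),
\end{equation*}
with $C_{20}=a(0)+p(0)J\omega$, $C_{30}=a'(0)+p'(0)J\omega$, $C_{02}=b(0)+q(0)J\omega$, and $C_{03}=b'(0)+q'(0)J\omega$. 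In particular $df_0=0$, confirming that $0$ is a corank two singular point.

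Next, from this expansion,
\begin{equation*}
\lambda(u,v)=\det(f_u,f_v)=4\det(C_{20},C_{02})\,uv+O(3).
\end{equation*}
When $\det(C_{20},C_{02})\ne 0$, $\lambda$ has a non-degenerate index-one critical point at $0$, so $\det\hess(\lambda)(0)<0$, and the Hesse quadric factorises as $uv=0$, giving $\eta_1=\partial_u$ and $\eta_2=\partial_v$ as two linearly independent solutions. If instead $\det(C_{20},C_{02})=0$, then $\lambda$ is degenerate, which rules out a sharksfin and corresponds to the failure of the first stated determinant.

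With these choices, $\eta_1^2f(0)=2C_{20}$, $\eta_1^3f(0)=6C_{30}$, $\eta_2^2f(0)=2C_{02}$ and $\eta_2^3f(0)=6C_{03}$, so the two remaining conditions \eqref{eq:cricond} reduce to $\det(C_{20},C_{30})\ne 0$ and $\det(C_{02},C_{03})\ne 0$. A Laplace expansion along the first row shows that the three $3\times 3$ determinants in the statement equal, in the order presented, $\det(C_{20},C_{02})$, $\det(C_{20},C_{30})$ and $\det(C_{02},C_{03})$, giving the equivalence. The main obstacle is the bookkeeping in the Taylor expansion, namely verifying that all cross-terms of orders $2$ and $3$ vanish because each would require simultaneous factors from both $\alpha$ and $\beta$, hence an extra factor of $u^2p(u)$ or $v^2q(v)$; once this is done, Theorem~\ref{thm:cri} applies essentially mechanically.
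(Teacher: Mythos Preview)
Your proof is correct and follows essentially the same approach as the paper: both compute the Taylor expansion of $f$, observe that the Jacobian has the factor $uv$ (so $\eta_1=\partial_u$, $\eta_2=\partial_v$), evaluate $f_{uu},f_{uuu},f_{vv},f_{vvv}$ at $0$, and apply Theorem~\ref{thm:cri}. Your packaging via the vectors $C_{ij}$ and $J\omega$ makes the Laplace-expansion identification of the $3\times3$ determinants with $\det(C_{20},C_{02})$, $\det(C_{20},C_{30})$, $\det(C_{02},C_{03})$ more transparent than the paper's ``calculating \ldots\ we have the assertion,'' but the underlying argument is the same.
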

We remark that the curve $\alpha=(u^2a_1(u),u^2a_2(u)):
(\R,0)\to(\R^2,0)$ is a $3/2$-cusp if and only if
$$
\det\pmt{a_1(0)&a_2(0)\\a_1'(0)&a_2'(0)}\ne0.
$$

\begin{proof}
By definition,
\begin{align*}
f=& 
\Big(v^2 b_1(v)+u^2 a_1(u) \cos(v^2 q(v))
+w_1 \cos(u^2 p(u)+v^2 q(v))\\
&
-u^2 a_2(u) \sin(v^2 q(v))
-w_2 \sin(u^2 p(u)+v^2 q(v)),\\
&v^2 b_2(v)
+u^2 a_2(u) \cos(v^2 q(v))
+w_2 \cos(u^2 p(u)+v^2 q(v))\\
&+u^2 a_1(u) \sin(v^2 q(v))
+w_1 \sin(u^2 p(u)+v^2 q(v))
\Big).
\end{align*}
This shows the first assertion.
By a direct calculation, the determinant of the Jacobi matrix
of $f$ has the factor $uv$. 
Thus the Hesse matrix of $\det Jf$ at $(0,0)$ is
$$
\pmt{0&h_{11}\\h_{11}&0},
$$
where 
$$
h_{11}=
\det\pmt{a_1(0)&a_2(0)\\b_1(0)&b_2(0)}
+w_1
\det\pmt{a_1(0)&p(0)\\b_1(0)&q(0)}
+w_2
\det\pmt{a_2(0)&p(0)\\b_2(0)&q(0)}.
$$
Thus we see the solutions of Hesse quadrics 
can be taken as $\eta_1=\partial_u$ and 
$\eta_2=\partial_v$.
Since
\begin{align*}
f_{uu}(0,0)=&2(a_1(0)- w_2 p(0),a_2(0)+w_1 p(0)),\\
f_{uuu}(0,0)=&
6(a_1'(0)-w_2 p'(0),
  a_2'(0)+w_1 p'(0)),\\
f_{vv}(0,0)=&
2(b_1(0)-w_2 q(0),b_2(0)+w_1 q(0)),\\
f_{vvv}(0,0)=&
6(b_1'(0)-w_2 q'(0),
  b_2'(0)+w_1 q'(0)),
\end{align*}
and calculating 
$\det(f_{uu}(0,0),f_{uuu}(0,0))$ and
$\det(f_{vv}(0,0),f_{vvv}(0,0))$,
we have the assertion.
\end{proof}


\addresslist

\begin{thebibliography}{99}
\bibitem{bdupw}
J. W. Bruce, A. A. du Plessis and C. T. C. Wall,
{\it Determinacy and unipotency},
Invent. Math. {\bf 88} (1987), no. 3, 521--554. 


\bibitem{bw}
J. W. Bruce and J. West,
{\it Functions on a crosscap},
Math. Proc. Cambridge Philos. Soc. {\bf 123} (1998), no. 1, 19--39.
\bibitem{dupgafwil}
A. du Plessis, T. Gaffney and L. C. Wilson,
{\it Map-germs determined by their discriminants},
Stratifications, singularities and differential equations, I 
(Marseille, 1990; Honolulu, HI, 1990), 1--40, Travaux en Cours, {\bf 54}, 
Hermann, Paris, 1997.
\bibitem{FH2}
T. Fukui and M. Hasegawa,
{\it Fronts of Whitney umbrella-a differential 
geometric approach via blowing up},
J. Singul. {\bf 4} (2012), 35--67. 


\bibitem{fukui}
T. Fukui,
{\it Local differential geometry of cuspidal edge and swallowtail},
to appear in Osaka J. Math.
\bibitem{gibsonmarshxiang}
C.G. Gibson, D. Marsh and Y. Xiang,
{\it Singular aspects of general planar motions 
with two degrees of freedom},
Internat J. Robotics Research {\bf 17}, 10, (1998) 1068--1080.

\bibitem{goryu}
V. V. Goryunov,
{\it Singularities of projections of complete intersections}
J. Soviet Math. {\bf 27} (1984) 2785--2811.


\bibitem{hhnuy}
M. Hasegawa, A. Honda, K. Naokawa, M. Umehara and K. Yamada,
{\it Intrinsic Invariants of Cross Caps,}
Selecta Math. {\bf 20}, 3, (2014) 769--785.


\bibitem{hnsuydual}
A. Honda, K. Naokawa, K. Saji, M. Umehara and K. Yamada,
{\it Duality on generalized cuspidal edges preserving singular set 
images and first fundamental forms},
arXiv:1906.02556.

\bibitem{kabata}
Y. Kabata,
{\it Recognition of plane-to-plane map-germs},
Topology Appl. {\bf 202} (2016), 216--238.
\bibitem{MS}
L. F. Martins and K. Saji,
{\it Geometric invariants of cuspidal edges},
Canad. J. Math. {\bf 68} (2016), 445--462.

\bibitem{ohmaic}
T. Ohmoto and F. Aicardi,
{\it Francesca First order local invariants of apparent contours},
Topology {\bf 45} (2006), no. 1, 27--45. 


\bibitem{axial}
R. Oset Sinha and K. Saji,
{\it The axial curvature for corank\/ $1$ singular surfaces},
preprint, arXiv:1911.08823.
\bibitem{OTflat}
R. Oset Sinha and F. Tari,
{\it On the flat geometry of the cuspidal edge},
Osaka J. Math. {\bf 55} (2018), no. 3, 393--421.

\bibitem{rieger}
J. H. Rieger,
{\it Families of maps from the plane to the plane},
J. London Math. Soc. (2) {\bf 36} (1987), no. 2, 351--369.

\bibitem{rieruas}
J. H. Rieger and M. A. S. Ruas,
{\it Classification of\/ $\A$-simple germs from\/ $k^n$ to\/ $k^2$},
Compositio Math. {\bf 79} (1991), no. 1, 99--108.

\bibitem{sajipl}
K. Saji,
{\it Criteria for singularities of smooth maps from the plane into 
the plane and their applications}, 
Hiroshima Math. J. {\bf 40} (2010), no. 2, 229--239. 

\bibitem{sajid4}
K. Saji,
{\it Criteria for\/ $D_4$ singularities of wave fronts},
Tohoku Math. J. (2) {\bf 63} (2011), no. 1, 137--147.

\bibitem{sajisw}
K. Saji,
{\it Normal form of the swallowtail and its applications},
Internat. J. Math. {\bf 29} (2018), no. 7, 1850046, 17 pp. 

\bibitem{suyak}
K. Saji, M. Umehara and K. Yamada,
{\it $A_k$ singularities of wave fronts},
Math. Proc. Camb. Philos. Soc. {\bf 146} (2009), no. 3, 731--746. 
\bibitem{suyo}
K. Saji, M. Umehara, and K. Yamada,
{\it The duality between singular points and
inflection points on wave fronts},
Osaka J. Math. {\bf 47} (2010), no. 2, 591--607.

\bibitem{ttohoku}
F. Tari,
{\it On pairs of geometric foliations on a cross-cap},
Tohoku Math. J. (2) {\bf 59} (2007), no. 2, 233--258.
\bibitem{umecusp}
M. Umehara,
{\it
Differential geometry on surfaces with singularities},
in: H. Arai, T. Sunada, K. Ueno (Eds.),
The World of Singularities, Nippon-Hyoron-sha
Co., Ltd., 2005, pp. 50--64 (in Japanese).

\bibitem{WE}
J. West,
{\it The differential geometry of the cross-cap},
PhD. thesis, University of Liverpool (1995)

\bibitem{whitney}
H. Whitney,
{\it On singularities of mappings of euclidean spaces. I. 
Mappings of the plane into the plane},
Ann. of Math. (2) {\bf 62} (1955), 374--410.
\bibitem{wirth}
L. Wirthm\"uller,
{\it Singularities determined by their discriminant},
Math. Ann. {\bf 252} (1980), no. 3, 237--245.


\bibitem{yoshikabaoh}
T. Yoshida, Y. Kabata and T. Ohmoto,
{\it Bifurcation of plane-to-plane map-germs of corank\/ $2$},
Q. J. Math. {\bf 66} (2015), no. 1, 369--391. 
\end{thebibliography}
\end{document}